\theoremstyle{definition}
\newtheorem{definition}{Definition}[section]
\newtheorem{proposition}[definition]{Proposition}
\newtheorem{theorem}[definition]{Theorem}
\newtheorem{lemma}[definition]{Lemma}
\newtheorem{remark}[definition]{Remark}
\def\<{\mathop{<}}
\def\>{\mathop{>}}
\newcommand{\spt}{\mathrm{spt}\,}
\numberwithin{equation}{section}
\def\XXint#1#2#3{{\setbox0=\hbox{$#1{#2#3}{\int}$}
\vcenter{\hbox{$#2#3$}}\kern-.5\wd0}}
\title[Convergence of Landau-Lifshitz equation to multi-phase MCF]{Convergence of Landau-Lifshitz equation to multi-phase Brakke's mean curvature flow}
\author[K. Takasao]{Keisuke Takasao}
\keywords{mean curvature flow, Landau-Lifshitz equation, phase field method}
\subjclass[2010]{Primary~35K93, Secondary~53C44}
\thanks{}
\date{}
\begin{document}
\maketitle
\begin{abstract}
We study the convergence of the system of the Allen-Cahn equations to the weak solution for the multi-phase  mean curvature flow in the sense of Brakke. The Landau-Lifshitz equation in this paper can be regarded as a system of Allen-Cahn equations with the Lagrange multiplier, which is a phase field model of the multi-phase mean curvature flow. Under an assumption that the limit of the energies of the solutions for the equations matches with the total variation for the singular limit of the solutions, we show that the family of the varifolds derived from the energies is a Brakke flow.
\end{abstract}
\section{Introduction}
Let $d\geq 2$, $T>0$, $\varepsilon \in (0,1)$, and $\Omega :=\mathbb{T}^d =(\mathbb{R} /\mathbb{Z})^d$. We consider the following Landau-Lifshitz equation:
\begin{equation}
\left\{ 
\begin{array}{ll}
\varepsilon \partial _t u^{\varepsilon}  = u^{\varepsilon} \times (u^{\varepsilon}\times \nabla _{L^2} E( u^{\varepsilon} )) ,& (x,t)\in \Omega \times (0,T),  \\
u^{\varepsilon} (x,0) = u^{\varepsilon} _0 (x) ,  &x\in \Omega,
\end{array} \right.
\label{ll}
\end{equation} 
where $u^\varepsilon =(u_1 ^\varepsilon, u_2 ^\varepsilon, u_3 ^\varepsilon ): \Omega \times [0,T) \to S^2$ and $\nabla _{L^2} E( u^{\varepsilon} )$ is the $L^2$-gradient of the following energy:
\begin{equation*}
E(u^\varepsilon ) = \sum _{i=1} ^3 \int _{\Omega} \Big( \frac{\varepsilon |\nabla u_i ^\varepsilon |^2}{ 2} + \frac{W(u_i ^\varepsilon)}{\varepsilon} \Big)\, dx.
\end{equation*}
Here $\displaystyle W(s) := \frac{(1-s)^2 s^2}{2}$. Therefore we have
\begin{equation*}
\nabla _{L^2} E(u^\varepsilon) = \sum _{i=1} ^3 \Big(- \varepsilon \Delta u_i ^\varepsilon + \frac{W ' (u_i ^\varepsilon)}{\varepsilon} \Big)e_i,
\end{equation*}
where $e_1 =(1,0,0)$, $e_2 =(0,1,0)$, and $e_3 =(0,0,1)$. 

The Landau-Lifshitz equation is studied by Landau and Lifshitz~\cite{landau-lifshitz} to describe the motions of the magnetization vectors in ferromagnetic materials. In general, the inside of the ferromagnetic material is divided into several regions called magnetic domains, and the boundaries are called the domain walls~\cite{hubert-schaefer}.

Let $u^\varepsilon$ be a classical solution for \eqref{ll} and $|u ^\varepsilon (x,0) | = 1$ for any $x \in \Omega$. Then we obtain $\partial _t  u ^\varepsilon \cdot u^\varepsilon \equiv 0$ and
\begin{equation}
|u^\varepsilon (x,t)|=1 \qquad \text{for any} \ (x,t) \in \Omega \times (0,T).
\label{eq1.2}
\end{equation}
This implies that
\begin{equation}
u^{\varepsilon} \times (u^{\varepsilon}\times \nabla _{L^2} E( u^{\varepsilon} ))= -\nabla _{L^2} E(u^\varepsilon) +\lambda^\varepsilon u^\varepsilon,
\label{eq1.3}
\end{equation}
where
\begin{equation*}
\lambda^\varepsilon=\lambda^\varepsilon(x,t) =  u^\varepsilon \cdot \nabla _{L^2} E(u^\varepsilon).
\end{equation*}
Thus we obtain the following system of the Allen-Cahn equations equivalent to \eqref{ll} with $N=3$:
\begin{equation}
\left\{ 
\begin{array}{ll}
\varepsilon \partial _t u^{\varepsilon}_i  =\varepsilon \Delta u_i ^\varepsilon - \dfrac{W ' (u_i ^\varepsilon)}{\varepsilon} +\lambda ^\varepsilon u_i ^\varepsilon ,& (x,t)\in \Omega \times (0,T),  \\
u^{\varepsilon} _i (x,0) = u^{\varepsilon} _{0,i} (x) ,  &x\in \Omega,
\end{array} \right.
\label{ac}
\end{equation} 
for $i=1,2,\dots,N$. Here $u^\varepsilon =(u_1 ^\varepsilon, u_2 ^\varepsilon, \dots, u_N ^\varepsilon ): \Omega \times [0,T) \to S^2$ and
\begin{equation*}
\lambda^\varepsilon=\lambda^\varepsilon(x,t) =  u^\varepsilon \cdot \nabla _{L^2} E(u^\varepsilon)  =  \sum _{i=1} ^N  u_ i^\varepsilon \Big(- \varepsilon \Delta u_i ^\varepsilon + \frac{W ' (u_i ^\varepsilon)}{\varepsilon} \Big).
\end{equation*}
If $|u ^\varepsilon (x,0) | = 1$ for any $x \in \Omega$ then the solution for \eqref{ac} also satisfies \eqref{eq1.2}.

Note that several equations similar to \eqref{ac} have been studied as the phase field model of the multi-phase mean curvature flow~\cite{bretin-danescu-penuelas-masnou, Bretin-Denis-Lachaud-Oudet, bretin-masnou, Garcke-Nestler-Stoth1998, Garcke-Nestler-Stoth}. As an analogy of those results, it is expected that when $\varepsilon \to 0$, $\Omega$ is divided into $\Omega _t ^1$, $\Omega _t ^2$,\dots, and  $\Omega _t ^N$, and the union of the boundaries of these multi-phases is the mean curvature flow under several suitable conditions, where $\Omega_t ^i = \{ x\in \Omega \, | \, \lim _{\varepsilon \to 0} u^\varepsilon _i (x,t) =1  \}$. We rermark that it can be said that $\{ \Omega_t ^i\} _{i=1} ^N$ and  its boundaries correspond to magnetic domains and domain walls, respectively.

Recently, in \cite{bretin-danescu-penuelas-masnou, Bretin-Denis-Lachaud-Oudet}, they studied the following system of the Allen-Cahn equations:
\begin{equation}
\left\{ 
\begin{array}{ll}
\varepsilon \partial _t u^{\varepsilon}_i  =\varepsilon \Delta u_i ^\varepsilon - \dfrac{W ' (u_i ^\varepsilon)}{\varepsilon} + \Lambda ^\varepsilon \sqrt{ 2W (u_i ^\varepsilon)} ,& (x,t)\in \Omega \times (0,T),  \\
u^{\varepsilon} _i (x,0) = u^{\varepsilon} _{0,i} (x) ,  &x\in \Omega,
\end{array} \right.
\label{ac3}
\end{equation} 
where 
\begin{equation*}
\Lambda^\varepsilon = \frac{ \sum _{i=1} ^N \Big(- \varepsilon \Delta u_i ^\varepsilon + \dfrac{W ' (u_i ^\varepsilon)}{\varepsilon} \Big) }{ \sum_{i=1} ^N \sqrt{2W(u_i ^\varepsilon)} }.
\end{equation*}

The solution for \eqref{ac3} with $\sum _{i=1} ^N u_{0,i} ^\varepsilon (x)\equiv 1$ satisfies
\begin{equation}
\sum _{i=1} ^N u_i ^\varepsilon (x,t)=1 \qquad \text{for any} \ (x,t) \in \Omega \times (0,T). 
\label{eq1.7}
\end{equation}
The property \eqref{eq1.7} is similar to \eqref{eq1.2}. 
By multiplying the Lagrange multiplier $\Lambda$ by the weight function $\sqrt{2W (u_ i ^\varepsilon)}$, $\Lambda$ affects only near the boundary of $\Omega _t ^i$. Phase field methods including the Lagrange multiplier with the weight function are studied for the volume preserving mean curvature flow \cite{alfaro-alifrangis, brassel-bretin, golovaty, takasao2017}. Note that by changing the constraint \eqref{eq1.7}, the energy estimates \eqref{eq3.1} and the main results of this paper can be obtained via \eqref{ac3} (see \eqref{ac4} below). Moreover, the main results can also obtained with regard to the standard equation \eqref{ac5} for the phase field model of the multi-phase mean curvature flow.

The existence of the weak solution for the multi-phase mean curvature flow is showed by Brakke ~\cite{brakke}. However, the  weak solution called Brakke flow has a trivial solution for any initial data, that is, for any initial data $M_0$, $M_t =\emptyset$ ($t>0$) is one of the solutions for Brakke flow. 
%Thus it was an open problem that whether the solution obtained in \cite{brakke} is a trivial solution or not for a long time. 
Recently, Kim and Tonegawa~\cite{kim-tonegawa} proved the non-trivial multi-phase mean curvature flow by the method used by Brakke and Huisken's monotonicity formula~\cite{huisken1990}. Otto and Laux~\cite{laux-otto} proved the convergence of the thresholding scheme introduced by Merriman, Bence and Osher~\cite{BMO} to the weak solution for the multi-phase mean curvature flow  in the BV-framework, under the assumption similar to that in \cite{Luckhaus-Sturzenhecker}. Laux and Simon~\cite{laux-simon} also proved the convergence of the vector-valued Allen-Cahn equation without the Lagrange multiplier to the weak solution for the multi-phase mean curvature flow under similar assumption. 

The main result of this paper is that the solutions for \eqref{ac} converges to the multi-phase mean curvature flow in the  sense of Brakke under an assumption (Assumption A) similar to that in \cite{ATW, laux-otto, laux-simon, Luckhaus-Sturzenhecker}. Moreover, under the assumption, we show Huisken's monotonicity formula for \eqref{ac}.

As a related to this result, Moser~\cite{Moser2008} showed that the singular limit of the Landau-Lifshitz equation becomes a geometric flow in the sense of varifold.

The organization of the paper is as follows. In Section 2 we set out basic definitions and show the convergence theorem of the main result of this paper. In Section 3 we prove the standard energy estimates for \eqref{ac} and show the vanishing of the discrepancy measure (Lemma \ref{lem3.5} (3)) and the main results by using Assumption A below. Section 4 we mention the modified system of the Allen-Cahn equations of \eqref{ac3}.
%%%%%%%%%%%%%%%%%%%%%%%%%%%%%%%%%%%%%%%%%%%%%%%%%%%%%%%%%%%%%%%%%%%%%%%%%%%%%%%%%%%%%%%%%%%%%%%%%%%%
\section{Preliminaries and main results}
For $r>0$ and $a\in \mathbb{R}^d$ we define $B_r (a):=\{ x\in \mathbb{R}^d \, | \, |x-a|<r \}$. Set $\omega _d :=\mathcal{L}^d (B_1 (0))$. We denote the space of bounded variation functions on $U\subset \mathbb{R}^d$ as $BV (U)$. For a function $f \in BV(U)$, we write the total variation measure of the distributional derivative $\nabla f$ by $\|\nabla f\|$. For a Caccioppoli set $U\subset \mathbb{R}^d$, we denote the reduced boundary of $U$ by $\partial ^\star U$. For $a=(a_1 ,a_2,\dots ,a_d)$, $b=(b_1 ,b_2,\dots ,b_d) \in \mathbb{R}^d$ we write $a\otimes b :=(a_i b_j) \in \mathbb{R}^{d\times d}$. For $A=(a_{ij}),B=(b_{ij}) \in \mathbb{R}^{d\times d}$, we define $A:B:=\sum_{i,j=1}^d a_{ij}b_{ij}$.

Next we recall several definitions from the geometric measure theory and refer to \cite{allard,brakke,federer,simon} for more details. For $k<d$, let $G(d,k)$ be the space of  $k$-dimensional subspaces of $\mathbb{R}^d$. For $S\in G (d,k)$, we also use $S$ to denote the $d$ by $d$ matrix representing the orthogonal projection $\mathbb{R}^d \to S$. 

For an open set $U\subset \mathbb{R}^d$, we denote $G_{k} (U) := U \times G(d,k)$.
We call a Radon measure on $G_k (U)$ a general $k$-varifold in $U$. We denote the set of all general $k$-varifolds by $\mathbf{V}_k(U)$. Let $V \in \mathbf{V}_k (U)$. We define a mass measure of $V$ by
\[ \| V \| (A) := V( A \times G(d,k) )  \]
for any Borel set $A \subset U$.
We also denote
\[ \| V \| (\phi ) :=  \int _{ G_k (U) } \phi (x) \, dV(x,S) \quad \text{for} \quad \phi \in C_c (\mathbb{R}^d). \]
%Let $E\subset \mathbb{R}^n$ be a countably $k$-rectifiable set. Then we can define a natural $k$-varifold $\mathbb{V}=\mathbb{V}(E)$ by
%\[ (\mathbb{V}(E))(\phi ) := \int _{E} \phi  (x,T_x E ) \, d\mathcal{H}^k (x) \]
%for $\phi \in C_c (\mathbb{R}^n \times G_k (\mathbb{R}^n))$. Here $T_x E$ is the approximate tangent space of $E$ at $x\in E$. A varifold $V\in \mathbf{V}_k (\mathbb{R}^n)$ is called $k$-dimensional rectifiable if there are positive real numbers $c_1,c_2, \dots$ and $\mathcal{H} ^k$-measurable subsets $E_1 , E_2,\dots$ of $\mathbb{R}^n$ such that $E_i \cap K$ is $k$-rectifiable for any $i$ and for any compact set $K$, and
%\[ V=\sum _{i=1} ^\infty c_i \mathbb{V}(E_i) .\]
%When $c_i \in \mathbb{N}$ for any $i$, we say $V$ is $k$-dimensional integral. Let $\mathbf{RV} _k (\mathbb{R}^n)$ and $\mathbf{IV} _k (\mathbb{R}^n)$ be the spaces of $k$-dimensional rectifiable and integral varifolds respectively.
The first variation $\delta V:C_c ^1 (U ;\mathbb{R}^d)\to \mathbb{R}$ of $V\in \mathbf{V}_k(U)$ is defined by
\[ \delta V(g):= \int _{G_k (U)} S : \nabla g(x)  \, dV(x,S) \quad \text{for} \quad g \in C_c ^1 (U ;\mathbb{R}^d).\]
We define a total variation $\| \delta V \|$ to be the largest Borel regular measure on $U$ determined by
\[ \|\delta V \| (G) := \sup \{ \delta V(g)  \, | \, g\in C_c ^1 (G;\mathbb{R}^d), \ |g|\leq 1 \} \]
for any open set $G\subset U$. If $\|\delta V \|$ is locally bounded and absolutely continuous with respect to $\|V\|$, by the Radon-Nikodym theorem, there exists a $\|V\|$-measurable function 
$ h(x)$ with values in $\mathbb{R}^d$ such that
\[  \delta V  (g) = - \int _{U } h(x)  \cdot g(x)\, d\|V\|(x)  \quad \text{for} \quad g\in C_c (U;\mathbb{R}^d). \]
We call $h$ the generalized mean curvature vector of $V$. 

We call $V \in \mathbf{V}_k (U)$ rectifiable if there exist a $\mathcal{H} ^k$ measurable countably $k$-rectifiable set $M \subset U$ and a locally $\mathcal{H}^k$ integrable function $\theta$ defined on $M$ such that
\[ V(\phi ) = \int _{M} \phi (x, T_x M) \theta (x) \, d\mathcal{H}^{k} \quad \text{for} \ \phi \in C_c (G_k (U)),\]
where $T_x M $ is the approximate tangent space of $M$ at $x$. If $\theta \in \mathbb{N}$ $\mathcal{H}^k$ a.e. on $M$, we say $V$ is integral. The sets of all rectifiable $k$-varifolds and all integral $k$-varifolds in
$U$ is denoted by $\mathbf{RV}_k (U)$ and $\mathbf{IV}_k (U)$, respectively.

\bigskip
%The following definition is similar to the formulation of Brakke's mean curvature flow~\cite{brakke}:
%\begin{definition}[$L^2$-flow~\cite{mugnai-roger2008}]
%Let $T>0$ and $V_t \in \mathbf{IV} _{d-1} (\Omega)$ for $t \in [0,T)$. Set $\mu_t :=\|V_t\|$ and $d\mu :=d\mu_t dt$. We call $\{ V _t \}_{t\in (0,T)}$ an $L^2$-flow if the following holds:
%\begin{enumerate}
%\item $V _t$ has a generalized mean curvature vector $h \in L^2 (\mu _t;\mathbb{R}^d)$ a.e. $t\in (0,T)$, 
%\item and there exist $C_T >0$ and a vector field $v\in L^2 (0,T;L^2 (\mu _t;\mathbb{R}^d) ))$ such that
%\begin{equation}
%v(x,t)\perp T_x \mu _t \quad \text{for} \ \mu \text{-a.e.} \ (x,t) \in \Omega \times (0,T)
%\label{velo1}
%\end{equation}
%and
%\begin{equation}
%\Big| \int _0 ^T \int _{\Omega} (\partial_t \eta + \nabla \eta \cdot v ) \, d\mu_t dt \Big| \leq C_T \| \eta \|_{C^0 (\Omega\times (0,T))}
%\label{velo2}
%\end{equation}
%for any $\eta \in C_c ^1 (\Omega\times (0,T))$. Here $T_x \mu _t $ is the approximate tangent space of $\mu _t$ at $x$.
%\end{enumerate}
%Moreover $v\in L^2 (0,T;L^2 (\mu _t;\mathbb{R}^d) )$ with \eqref{velo1} and \eqref{velo2} is called a generalized velocity vector.
%\end{definition}

Define
\[ \mu _t ^{i,\varepsilon} (\phi) = \sigma ^{-1} \int _{\Omega} \phi \Big( \frac{\varepsilon |\nabla u_i ^\varepsilon |^2}{ 2} + \frac{W(u_i ^\varepsilon)}{\varepsilon}  \Big) \, dx \quad \text{and} \quad \mu _t ^{\varepsilon} (\phi)= \sum _{i=1}^N \mu _t ^{i,\varepsilon} (\phi), \quad \phi \in C_c (\Omega),\]
where $\sigma = \int _0 ^1 \sqrt{2W(a)} \,da$ and $u_i ^\varepsilon$ is the solution for \eqref{ac}. Note that $\mu_t ^\varepsilon (\Omega)  = \sigma^{-1} E(u^{\varepsilon} (\cdot,t))$.

We denote the discrepancy measures $\xi _t ^{i,\varepsilon}$ and $\xi _t ^{\varepsilon}$ by
\[ \xi _t ^{i,\varepsilon} (\phi) = \sigma ^{-1} \int _{\Omega} \phi \Big( \frac{\varepsilon |\nabla u_i ^\varepsilon |^2}{ 2} - \frac{W(u_i ^\varepsilon)}{\varepsilon}  \Big) \, dx \quad \text{and} \quad \xi _t ^{\varepsilon} (\phi)= \sum _{i=1}^N \xi _t ^{i,\varepsilon} (\phi), \quad \phi \in C_c (\Omega).\]

Throughout this paper, we assume that there exists $D>0$ such that
\begin{equation}
\sup _{\varepsilon \in (0,1)} \mu_0 ^\varepsilon (\Omega) \leq D.
\label{eq2.4}
\end{equation}

To show the main result, we assume the following:

\noindent
\textbf{Assumption A} If $u_i^{\varepsilon } \to u_i $ in  $L^1 _{loc} (\Omega \times [0,T))$ for any $i=1,2,\dots, N$, then the family of the Radon measures $\{ \mu _t ^\varepsilon \}_{t \in [0,T)}$ satisfies
\begin{equation}
\lim_{\varepsilon \to 0}\int _0 ^T \mu _t ^\varepsilon (\Omega) \, dt = \int _0 ^T \sum _{i=1} ^N \| \nabla u _i (\cdot ,t) \| (\Omega) \, dt.
\label{assumption}
\tag{A}
\end{equation}
\begin{remark}
\begin{enumerate}
\item By Lemma \ref{lem3.2} and Lemma \ref{lem3.5}, there  exist a subsequence $\varepsilon \to 0$, $u_i$, and a Radon measure $\mu _t $ on $\Omega$ for any $t\in [0,T)$ such that $u_i^{\varepsilon } \to u_i $ in  $L^1 _{loc} (\Omega \times [0,T))$ for any $i=1,2,\dots, N$, $\mu _t ^\varepsilon \to \mu_t$ as Radon measures for any $t\in [0,T)$, and
\[ \mu _t (\Omega)  \geq  \sum _{i=1} ^N \| \nabla u _i (\cdot ,t) \| (\Omega) , \qquad \forall \, t \in [0,T). \]
\item Although the equations Laux and Simon~\cite{laux-simon} studied are different from those of this paper, if we replace their assumptions with the settings of this paper we have the following: $\{ \mu _t ^\varepsilon \}_{t \in [0,T)}$ satisfies 
\begin{equation}
\begin{split}
&\lim_{\varepsilon \to 0}\int _0 ^T \mu _t ^\varepsilon (\Omega) \, dt \\
=&\frac{1}{2}\int _0 ^T \sum _{1\leq i,j\leq N} ( \| \nabla u _i (\cdot ,t) \| (\Omega) + \| \nabla u_j (\cdot,t)\|(\Omega) - \| \nabla( u_i + u_j )(\cdot,t)\| (\Omega) )\, dt.
\end{split}
\label{assumption2}
\end{equation}
In the case of $N=3$, for any $t\in [0,T)$, $u_i(\cdot,t) =0$ or $1$ a.e. $x \in \Omega $ (see Lemma \ref{lem3.5} (1)) and \eqref{eq1.2} imply 
\begin{equation}
\begin{split}
\| \nabla( u_{\sigma (1)} + u_{\sigma (2)} )(\cdot,t)\| =\| \nabla u_{\sigma (3)} (\cdot,t)\|
\end{split}
\label{eq2.6}
\end{equation}
for any permutation $\sigma (\cdot)$ and $t\in [0,T)$. By \eqref{eq2.6}, \eqref{assumption2} is equivalent to \eqref{assumption}.
\end{enumerate}
\end{remark}

We define the backward heat kernel $\rho$ by
\[ \rho= \rho _{y,s} (x,t) := \frac{1}{(4\pi (s-t))^{\frac{d-1}{2}}} e^{-\frac{|x-y|^2}{4(s-t)}}, \qquad t<s, \ x,y\in \mathbb{R}^d. \]

The main result of this paper is the following:
\begin{theorem}\label{mainresults}
Let $d,N\geq 2$. Assume that $u^\varepsilon$ is a classical solution for \eqref{ac} and the initial data satisfies \eqref{eq2.4}. Then, under the Assumption A there exists a subsequence $\varepsilon \to 0$ such that the following holds:
\begin{enumerate}
\item[(a)] There exists a family of $(d-1)$-integral varifolds $\{ V_t \}_{t\in [0,T)}$ on $G_{d-1}(\Omega)$ such that
$\mu ^\varepsilon _t \to \mu _t$ as Radon measures on $\Omega $ for all $t \in [0,T)$, where $\mu_t =\| V_t \|$. 
\item[(b)] There exists $u_i \in BV _{loc}( \Omega \times [0,T)) \cap C^{\frac{1}{2}} _{loc} ([0,T) ;L^1 (\Omega))$ such that
\begin{enumerate}
\item[(b1)] $u_i ^{\varepsilon} \to u_i \ \ \text{in} \ L^1 _{loc} ( \Omega \times [0,T))$ and a.e. pointwise.
\item[(b2)] $u_i =0 $ or $1$ a.e. on $\Omega \times [0,T)$, and $\sum _{i=1} ^N u_i =1$ a.e. on $\Omega \times [0,T)$.
\item[(b3)] $\| \nabla u_i (\cdot,t) \| (\phi) \leq \mu _t ^i (\phi) $ for any $t\in [0,T)$ and $\phi \in C_c (\Omega;\mathbb{R}^+)$. Moreover $\| \nabla u_i (\cdot,t) \| (\Omega) = \mu _t ^i (\Omega) $ and $\lim _{\varepsilon \to 0} \xi _t ^\varepsilon (\Omega)=0$ a.e. $t\in [0,T)$.
\item[(b4)] $V_t \in \mathbf{IV} _{d-1} (\Omega)$ a.e. $t\in [0,T)$.
\end{enumerate}
\item[(c)] $\{ V _t \}_{t\in (0,T)}$ is the Brakke flow, that is, $\{ V _t \}_{t\in (0,T)}$ satisfies 
\begin{equation}
\int _{\Omega} \phi \, d\| V_t \| \Big|_{t=t_1} ^{t_2} \leq \int_{t_1} ^{t_2} \int _{\Omega} \Big(- \phi |h|^2 + \nabla \phi \cdot h +\partial _t \phi\Big) \, d\| V_t \| dt
\label{brakke}
\end{equation}
for any $\phi \in C_c ^1 (\Omega\times [0,T) )$ and $0\leq  t_1 <t_2 <T$.
Here $h$ is the generalized mean curvature vector of $V_t$. 
\item[(d)] (Huisken's monotonicity formula) We have
\begin{equation}
\begin{split}
\frac{d}{dt} \int _{\mathbb{R}^d} \rho_{y,s} (x,t) \, d\mu _t (x) \leq 0, 
\end{split}
\label{monoton}
\end{equation}
for any $x,y \in \mathbb{R}^d$ and $t<s$ with $t\in [0,T)$. Here $\mu _t$ is extended periodically to $\mathbb{R}^d$.
\end{enumerate}
\end{theorem}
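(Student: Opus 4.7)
The plan is to execute a Brakke-type convergence scheme whose three main inputs are the parabolic energy estimate, the vanishing of the discrepancy measure (Lemma \ref{lem3.5} (3)), and Assumption A. For (a) and (b), I would first derive compactness of the phase fields. The uniform energy bound \eqref{eq2.4}, together with the standard Allen--Cahn time-derivative estimate giving $\sqrt{\varepsilon}\,\partial_t u_i^\varepsilon$ bounded in $L^2(\Omega\times(0,T))$, provides a uniform $BV$ bound on $g(u_i^\varepsilon):=\int_0^{u_i^\varepsilon}\sqrt{2W(s)}\,ds$ in space and $1/2$-H\"older regularity in $t$ valued in $L^1(\Omega)$. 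Extracting a subsequence yields $u_i^\varepsilon\to u_i$ in $L^1_{loc}$ and a.e., with $u_i\in\{0,1\}$ a.e.\ and $\sum_i u_i=1$ coming from \eqref{eq1.2}. Assigning to each $u_i^\varepsilon$ the varifold whose tangent plane at $x$ is $I-\nu\otimes\nu$ with $\nu := \nabla u_i^\varepsilon/|\nabla u_i^\varepsilon|$ and whose mass is $\mu_t^{i,\varepsilon}$, standard varifold compactness produces $V_t$ with $\|V_t\|=\mu_t$. Lower semicontinuity gives the inequality in (b3); Assumption A together with Fatou forces equality globally on $\Omega$ for a.e.~$t$, which in turn forces $\xi_t^\varepsilon(\Omega)\to 0$ for a.e.~$t$.

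For the integrality statement (b4), I would follow the R\"oger--Tonegawa/Ilmanen strategy. On the set of good times where $\xi_t^\varepsilon \to 0$, slice-wise analysis of each $u_i^\varepsilon$ along lines normal to the limiting interface shows that the rescaled profile converges to a standard Modica minimizer connecting $0$ and $1$; the $(d-1)$-density of $\mu_t^i$ is then a positive integer multiple of $\sigma$ at $\mathcal{H}^{d-1}$-a.e.~point of the jump set of $u_i$. The subtlety is that each $u_i^\varepsilon$ carries a perturbation $\lambda^\varepsilon u_i^\varepsilon$; once the discrepancy vanishes, the contribution of this term to the blow-up limit of the profile equation is negligible and the standard Modica profile is recovered.

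For the Brakke inequality (c) I would differentiate $\mu_t^\varepsilon(\phi)$ in $t$, substitute \eqref{ac}, and integrate by parts. The crucial structural fact is that $|u^\varepsilon|\equiv 1$ gives $\sum_i u_i^\varepsilon\partial_t u_i^\varepsilon=0$ and $\sum_i u_i^\varepsilon\nabla u_i^\varepsilon=0$, so summed over $i$ the Lagrange-multiplier terms involving $\lambda^\varepsilon u_i^\varepsilon$ either cancel outright or collapse into contributions controlled by the discrepancy. Completing the square in the Ilmanen style for the combination $\varepsilon\partial_t u_i^\varepsilon - (\varepsilon\Delta u_i^\varepsilon - W'(u_i^\varepsilon)/\varepsilon)$, and invoking (b4) to identify the limit of the mean-curvature term with $\int(\nabla\phi\cdot h - \phi|h|^2)\,d\|V_t\|$, yields \eqref{brakke} after integration in $t$. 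Claim (d) follows from the same computation with test function $\rho_{y,s}(\cdot,t)$, using the standard Huisken-type identities for the backward heat kernel together with the Lagrange-multiplier cancellation to dispose of error terms in the $\varepsilon\to 0$ limit.

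The main obstacle is (b4): without a maximum principle that preserves a one-dimensional tanh-like profile through the coupled evolution, integrality has to be extracted from the delicate interplay of vanishing discrepancy, the mass equality forced by Assumption A, and slice-wise $\Gamma$-convergence of the Allen--Cahn functional. Once (b4) is in hand, (c) and (d) reduce to essentially standard computations, the only new ingredient being the bookkeeping for the Lagrange multiplier, which is tamed throughout by the constraint $|u^\varepsilon|=1$.
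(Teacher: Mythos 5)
Your outline for (a), (b1)--(b3), (c) and (d) is essentially the paper's own argument: the energy identity \eqref{eq3.1} gives the $BV$ bound on $G\circ u_i^\varepsilon$ and the $C^{1/2}([0,T);L^1)$ estimate; Assumption A upgrades the lower-semicontinuity inequality \eqref{eq3.4} to the mass identity \eqref{eq3.11}, which in turn forces $\int_\Omega(a_i^\varepsilon-b_i^\varepsilon)^2dx\to 0$ and hence $\xi_t^{i,\varepsilon}\to 0$ for a.e.\ $t$; and both the Brakke inequality and the monotonicity formula rest on the cancellations $\sum_i u_i^\varepsilon\partial_t u_i^\varepsilon=0$ and $\sum_i u_i^\varepsilon\nabla u_i^\varepsilon=0$ coming from \eqref{eq1.2}, plus the Ilmanen-style completion of the square. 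On these points you and the paper agree.

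The genuine gap is in your treatment of (b4). You propose to prove integrality by the R\"oger--Tonegawa/Ilmanen route (slicing, blow-up, convergence to the Modica profile), and you yourself flag the Lagrange multiplier as "the subtlety," asserting that once the discrepancy vanishes its contribution to the blow-up limit is negligible. That assertion is unsupported and is precisely where this route breaks down: the equation satisfied by a \emph{single} phase $u_i^\varepsilon$ contains $\lambda^\varepsilon u_i^\varepsilon$, and no bound on $\lambda^\varepsilon$ (pointwise, $L^2$, or otherwise) is available anywhere in the argument; the cancellation you exploit elsewhere occurs only after summing over $i$, so the per-phase first variation and the per-phase profile equation are simply not controlled, and the integrated statement $\xi_t^\varepsilon(\Omega)\to 0$ does not tame $\lambda^\varepsilon$ along individual slices. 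The paper avoids this entirely, and this is exactly what Assumption A buys: since \eqref{eq3.4} holds for all nonnegative $\phi$ and the total masses agree by \eqref{eq3.11}, one has equality of \emph{measures} $\mu_t^i=\|\nabla u_i(\cdot,t)\|$ for a.e.\ $t$; each $u_i(\cdot,t)$ is a characteristic function of a set of finite perimeter, so $\mu_t=\sum_{i}\mathcal{H}^{d-1}\lfloor\partial^\star\{u_i(\cdot,t)=1\}$ has integer $(d-1)$-density $\mathcal{H}^{d-1}$-a.e. Rectifiability of $V_t$ then follows from the first-variation bound \eqref{eq3.15} (which only uses the summed quantity $\sum_i\varepsilon\partial_t u_i^\varepsilon\nabla u_i^\varepsilon\cdot g$ and Cauchy--Schwarz), the density restriction \eqref{eq3.18}, and Allard's rectifiability theorem; rectifiability together with integer density gives $V_t\in\mathbf{IV}_{d-1}(\Omega)$, as in Lemma \ref{lem3.6}. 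So (b4) is not the hard analytic core you take it to be; it is a near-immediate consequence of Assumption A, and attempting it by profile analysis instead would require estimates on $\lambda^\varepsilon$ that do not exist in this setting.
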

\begin{remark}
Without Assumption A, we have
\begin{equation}
\begin{split}
\frac{d}{dt} \int _{\mathbb{R}^d} \rho \, d\mu^\varepsilon _t (x) \leq \frac{1}{2(s-t)}\int _{\mathbb{R}^d}  \rho \, d\xi^\varepsilon _t (x)
\end{split}
\label{monoton3}
\end{equation}
for any $\varepsilon \in (0,1)$, $x,y \in \mathbb{R}^d$ and $t<s$ with $t\in [0,T)$ (see \eqref{monoton2} below). Thus, if there exists a suitable estimate for the function $\sum_{i=1} ^N \Big( \frac{\varepsilon |\nabla u_i ^\varepsilon |^2}{ 2} - \frac{W(u_i ^\varepsilon)}{\varepsilon} \Big)$ such as the negativity, then by  an argument similar to that in \cite{Ilmanen}, we can expect to obtain the vanishing of $\xi_t ^\varepsilon$ and the multi-phase Brakke flow via the phase field method, without Assumption A.
%$\varepsilon _0 >0$, $C>0$, and $\gamma \in (0,1)$ such that 
%\begin{equation}
%\sum_{i=1} ^N \Big( \frac{\varepsilon |\nabla u_i ^\varepsilon |^2}{ 2} - \frac{W(u_i ^\varepsilon)}{\varepsilon} \Big) \leq C \varepsilon ^{-\gamma}
%\label{eq2.8}
%\end{equation}
%for any $\varepsilon \in (0,\varepsilon _0)$ and $t\in[0,\infty)$, then by  an argument similar to that in \cite{takasaotonegawa}, we can expect to obtain the multi-phase Brakke flow via the phase field method. 
\end{remark}
%%%%%%%%%%%%%%%%%%%%%%%%%%%%%%%%%%%%%%%%%%%%%%%%%%%%%%%%%%%%%%%%%%%%%%%%%%%%%%%%%%%%%%%%%%%%%%%%%%%%
\section{Proofs}
First we show the standard energy estimates for \eqref{ac}. In this section we assume that $u^\varepsilon$ is the classical solution for \eqref{ac} with \eqref{eq2.4}. 

\bigskip

By \eqref{ac} we have
\begin{equation*}
\varepsilon |\partial _t u^\varepsilon |^2 =  -\nabla _{L^2} E(u^\varepsilon) \cdot \partial _t u^\varepsilon + \lambda^\varepsilon u^\varepsilon\cdot \partial _t u^\varepsilon=-\nabla _{L^2} E(u^\varepsilon) \cdot \partial _t u^\varepsilon
\end{equation*}
and
\begin{equation*}
\varepsilon \partial _t u^\varepsilon \cdot \partial _{x_k} u^\varepsilon  =  -\nabla _{L^2} E(u^\varepsilon) \cdot \partial _{x_k} u^\varepsilon + \lambda^\varepsilon u^\varepsilon\cdot \partial _{x_k} u^\varepsilon=-\nabla _{L^2} E(u^\varepsilon) \cdot \partial _{x_k} u^\varepsilon,
\end{equation*}

where \eqref{eq1.2} is used. Then we obtain
\begin{proposition}\label{prop3.1}
\begin{equation}
\frac{d}{dt} \mu_t ^\varepsilon (\Omega) = -\sigma ^{-1}\int _{\Omega} \varepsilon |\partial _t u ^\varepsilon|^2 \, dx , \quad \mu_T ^\varepsilon (\Omega)  + \sigma ^{-1}\int _0 ^T \int _{\Omega} \varepsilon | \partial _t  u ^\varepsilon|^2 \, dx dt=\mu_0 ^\varepsilon (\Omega) \leq D 
\label{eq3.1}
\end{equation}
and
\begin{equation}
\sum_{i=1} ^N \int _{\Omega} \varepsilon \partial _t u^\varepsilon _i \nabla u^\varepsilon _i \cdot g \, dx  
= \sum_{i=1} ^N \int _{\Omega} \Big( \varepsilon \Delta u_i ^\varepsilon - \frac{W ' (u_i ^\varepsilon)}{\varepsilon}\Big) \nabla u_i ^\varepsilon \cdot g \, dx, \quad g \in (C(\Omega))^d.
\label{eq3.2}
\end{equation}
\end{proposition}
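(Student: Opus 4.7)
The plan is to derive both identities from the pointwise relations displayed immediately before the proposition, namely
\[
\varepsilon|\partial_t u^\varepsilon|^2=-\nabla_{L^2}E(u^\varepsilon)\cdot\partial_t u^\varepsilon,\qquad \varepsilon\,\partial_t u^\varepsilon\cdot\partial_{x_k}u^\varepsilon=-\nabla_{L^2}E(u^\varepsilon)\cdot\partial_{x_k}u^\varepsilon,
\]
which already encode the fact that the Lagrange-multiplier term $\lambda^\varepsilon u^\varepsilon$ disappears against both $\partial_t u^\varepsilon$ and $\partial_{x_k}u^\varepsilon$ by virtue of the constraint $|u^\varepsilon|=1$ in \eqref{eq1.2}. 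Since these identities remove the only obstacle (the $\lambda^\varepsilon$-nonlinearity), what remains is essentially bookkeeping.

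For \eqref{eq3.1}, I would first differentiate $\mu_t^\varepsilon(\Omega)=\sigma^{-1}E(u^\varepsilon(\cdot,t))$ under the integral; since $u^\varepsilon$ is a classical solution and $\Omega=\mathbb{T}^d$ is closed, this is justified and integration by parts on $\Omega$ produces no boundary terms. Writing out
\[
\frac{d}{dt}E(u^\varepsilon)=\sum_{i=1}^N\int_{\Omega}\Bigl(\varepsilon\nabla u_i^\varepsilon\cdot\nabla\partial_t u_i^\varepsilon+\tfrac{W'(u_i^\varepsilon)}{\varepsilon}\partial_t u_i^\varepsilon\Bigr)\,dx=\int_{\Omega}\nabla_{L^2}E(u^\varepsilon)\cdot\partial_t u^\varepsilon\,dx,
\]
and then applying the first pointwise identity gives the differential relation $\frac{d}{dt}\mu_t^\varepsilon(\Omega)=-\sigma^{-1}\int_{\Omega}\varepsilon|\partial_t u^\varepsilon|^2\,dx$. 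Integrating from $0$ to $T$ and invoking the hypothesis \eqref{eq2.4} yields the energy equality together with the bound by $D$.

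For \eqref{eq3.2}, I would take the second pointwise identity, multiply by $g_k(x)$, and sum over $k=1,\dots,d$, obtaining
\[
\sum_{i=1}^N\varepsilon\,\partial_t u_i^\varepsilon\,\nabla u_i^\varepsilon\cdot g=\sum_{i=1}^N\Bigl(\varepsilon\Delta u_i^\varepsilon-\tfrac{W'(u_i^\varepsilon)}{\varepsilon}\Bigr)\nabla u_i^\varepsilon\cdot g
\]
pointwise in $\Omega\times(0,T)$. Integrating over $\Omega$ gives the stated identity, with no regularity of $g$ beyond continuity needed because no derivative falls on $g$.

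The only potential obstacle is the justification of the differentiation under the integral sign and of the integration by parts for $\frac{d}{dt}E$; both are immediate from the assumed classical-solution regularity on the compact torus, so I do not expect any real difficulty. All of the substantive work, namely the cancellation of $\lambda^\varepsilon$, has already been performed in the two displayed lines preceding the proposition.
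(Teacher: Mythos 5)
Your proposal is correct and follows essentially the same route as the paper: the paper's proof consists precisely of the two displayed pointwise identities (obtained by testing \eqref{ac} against $\partial_t u^\varepsilon$ and $\partial_{x_k}u^\varepsilon$ and using \eqref{eq1.2} to kill the $\lambda^\varepsilon$ terms), from which the proposition is stated to follow. Your filling-in of the remaining bookkeeping --- differentiation of the energy, integration by parts on the torus, integration in time with \eqref{eq2.4}, and the componentwise pairing with $g$ --- is exactly the intended argument.
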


\bigskip

By \eqref{eq3.1}  we have
\begin{lemma}\label{lem3.2}
There exist a family of  a Radon measures $\{ \mu _t \}_{t \in [0,T)} $ on $\Omega$ and a subsequence $\{ \varepsilon_j \}_{j=1} ^\infty $ (denoted by the same index) such that
\begin{equation}
\mu_t ^{\varepsilon _j}\to \mu_t \qquad \text{as Radon measures for any} \ t\in [0,T). 
\label{eq3.3}
\end{equation}
\end{lemma}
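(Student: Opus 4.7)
The plan is to combine the total-mass bound from Proposition~\ref{prop3.1} with a uniform bound on the variation of $t \mapsto \mu_t^\varepsilon(\phi)$ for $\phi \in C^1(\Omega)$, and then extract a subsequence by Helly plus a diagonal argument.

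First I would observe that \eqref{eq3.1} yields monotonicity of $t \mapsto \mu_t^\varepsilon(\Omega)$ together with the uniform cap $\mu_t^\varepsilon(\Omega) \leq D$ for every $t \in [0,T)$ and every $\varepsilon \in (0,1)$. Thus each family $\{\mu_t^\varepsilon\}_\varepsilon$ (at fixed $t$) is weakly-$\ast$ precompact on $\Omega$. The content of the lemma is that one can pass to a single subsequence that works simultaneously for every $t \in [0,T)$, so the main task is some regularity in $t$ that is uniform in $\varepsilon$.

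The key step is to differentiate $\mu_t^\varepsilon(\phi)$ in $t$ for $\phi \in C^1(\Omega)$, integrate by parts, and substitute from \eqref{ac}. Using the constraint $|u^\varepsilon|=1$, which gives $\sum_i u_i^\varepsilon \partial_t u_i^\varepsilon \equiv 0$, the Lagrange-multiplier contribution cancels upon summing over $i$, and I expect to obtain
\begin{equation*}
\frac{d}{dt} \mu_t^\varepsilon(\phi) \;=\; -\sigma^{-1}\!\int_\Omega \phi \sum_{i=1}^N \varepsilon|\partial_t u_i^\varepsilon|^2 \, dx \;-\; \sigma^{-1} \sum_{i=1}^N \int_\Omega \varepsilon\, \partial_t u_i^\varepsilon\, \nabla u_i^\varepsilon \cdot \nabla \phi \, dx,
\end{equation*}
which is consistent with \eqref{eq3.2}. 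Estimating the cross term by Cauchy--Schwarz in $(x,t)$ and applying \eqref{eq3.1} to both the dissipation $\int_0^T\!\int_\Omega \sum_i \varepsilon|\partial_t u_i^\varepsilon|^2 dx\, dt \leq \sigma D$ and the Dirichlet part $\int_\Omega \sum_i \varepsilon|\nabla u_i^\varepsilon|^2 \leq 2\sigma D$ then delivers a uniform-in-$\varepsilon$ bound
\begin{equation*}
\int_0^T \Big|\frac{d}{dt}\mu_t^\varepsilon(\phi)\Big|\, dt \;\leq\; C(\phi,D,T).
\end{equation*}

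With this BV-in-$t$ bound and the uniform $L^\infty$ bound $|\mu_t^\varepsilon(\phi)| \leq D\|\phi\|_\infty$, I would fix a countable dense subset $\{\phi_k\}_{k\geq 1}\subset C^1(\Omega)$ of $C(\Omega)$ (separability follows from compactness of $\Omega = \mathbb{T}^d$). Helly's selection theorem applied to $t \mapsto \mu_t^{\varepsilon}(\phi_k)$, together with a diagonal extraction, yields a single subsequence $\{\varepsilon_j\}$ such that $\mu_t^{\varepsilon_j}(\phi_k) \to L_t(\phi_k)$ for every $t \in [0,T)$ and every $k$. For each fixed $t$, the map $\phi_k \mapsto L_t(\phi_k)$ is linear and positive with $|L_t(\phi_k)|\leq D\|\phi_k\|_\infty$, so it extends by uniform continuity to all of $C(\Omega)$ and the Riesz representation theorem produces a Radon measure $\mu_t$ on $\Omega$ with $\mu_t(\phi_k) = L_t(\phi_k)$. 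A standard three-$\varepsilon$ argument using $|\mu_t^{\varepsilon_j}(\phi - \phi_k)| + |\mu_t(\phi-\phi_k)| \leq 2D\|\phi-\phi_k\|_\infty$ then upgrades convergence to every $\phi \in C(\Omega)$, giving \eqref{eq3.3}.

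The only nontrivial ingredient is the BV-in-$t$ estimate; once the cancellation of $\lambda^\varepsilon$ upon summation is spotted, this reduces to Cauchy--Schwarz against the energy identity, and the rest is textbook Helly/diagonalization on a separable space.
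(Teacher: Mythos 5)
Your proposal is correct and follows essentially the same route as the paper, which skips the proof by deferring to \cite[Proposition 4.2]{mugnai-roger2008}: uniform-in-$\varepsilon$ control of the time variation of $t\mapsto\mu_t^\varepsilon(\phi)$ (obtained from the differentiated energy identity, the cancellation of $\lambda^\varepsilon$ via $\sum_i u_i^\varepsilon\partial_t u_i^\varepsilon=0$, Cauchy--Schwarz and \eqref{eq3.1}), followed by Helly selection, diagonalization over a countable dense family of test functions, and Riesz representation. The only cosmetic difference is that the cited argument uses a one-sided ``semi-decreasing'' bound ($t\mapsto\mu_t^\varepsilon(\phi)+C_\phi t$ nonincreasing, via Young's inequality) rather than your two-sided BV-in-time bound, but both feed into the same Helly-type compactness.
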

The proof is similar to that in \cite[Proposition 4.2]{mugnai-roger2008}. So we skip the proof.
\begin{definition} If $ |\nabla u _i ^\varepsilon |\not=0 $, we denote $n^\varepsilon _i =\frac{\nabla u _i ^\varepsilon }{|\nabla u _i ^\varepsilon|}$.

We define the varifold corresponds to $\mu_t ^{i,\varepsilon}$ by
\[ V_t ^{i,\varepsilon} (\phi) :=\int_{\Omega\cap\{ |\nabla u _i ^\varepsilon |\not=0 \}} \phi (x, I-n^\varepsilon _i \otimes n^\varepsilon _i ) \, d\mu _t ^{i,\varepsilon},\quad \phi \in C_c(G_{d-1} (\Omega))  .\]
The first variation of $V_t ^{i,\varepsilon}$ is given by
\[ \delta V_t ^{i,\varepsilon} (g) = \int_{\Omega\cap\{ |\nabla u_i ^\varepsilon |\not=0 \}} ( I-n_i ^\varepsilon\otimes n _i ^\varepsilon ) :\nabla g \, d\mu _t ^{i,\varepsilon},\quad g \in C_c(\Omega;\mathbb{R}^d) . \]
\end{definition}
By the integration by parts(see \cite[Lemma 6.6]{takasaotonegawa}), we have
\begin{proposition}
For any $g \in C_c(\Omega;\mathbb{R}^d)$,
\begin{equation}
\begin{split}
\delta V_t ^{i, \varepsilon} (g) =& - \sigma^{-1}\int _{\Omega} (g\cdot \nabla u_i ^\varepsilon) \Big(-\varepsilon \Delta u_i ^\varepsilon +\frac{W'(u^\varepsilon _i )}{\varepsilon}\Big) \, dx\\
&+  \int_{\Omega\cap\{ |\nabla u _i ^\varepsilon |\not=0 \}}  (n _i ^\varepsilon \otimes n _i ^\varepsilon) : \nabla g \,d\xi _t ^{i,\varepsilon}
-\sigma^{-1}\int_{\Omega\cap\{ |\nabla u_i ^\varepsilon |=0 \}} I : \nabla g  \frac{W(u_i ^\varepsilon )}{\varepsilon} \, dx. 
\end{split}
\label{firstvariation}
\end{equation}
\end{proposition}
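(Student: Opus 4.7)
The plan is to prove the identity by a direct computation starting from the definition of $\delta V_t^{i,\varepsilon}$ and performing an algebraic split of the integrand followed by integration by parts. Write $e_\varepsilon := \varepsilon |\nabla u_i^\varepsilon|^2/2$ and $w_\varepsilon := W(u_i^\varepsilon)/\varepsilon$, so that the density of $\mu_t^{i,\varepsilon}$ is $\sigma^{-1}(e_\varepsilon+w_\varepsilon)$ and that of $\xi_t^{i,\varepsilon}$ is $\sigma^{-1}(e_\varepsilon-w_\varepsilon)$. On $\{|\nabla u_i^\varepsilon|\neq 0\}$ one has $\varepsilon\nabla u_i^\varepsilon\otimes\nabla u_i^\varepsilon = 2e_\varepsilon\, n_i^\varepsilon\otimes n_i^\varepsilon$, which gives the pointwise identity
\begin{equation*}
(e_\varepsilon+w_\varepsilon)(I-n_i^\varepsilon\otimes n_i^\varepsilon)
= \bigl[(e_\varepsilon+w_\varepsilon) I-\varepsilon\nabla u_i^\varepsilon\otimes\nabla u_i^\varepsilon\bigr]
+ (e_\varepsilon-w_\varepsilon)\, n_i^\varepsilon\otimes n_i^\varepsilon.
\end{equation*}

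Next I would integrate this identity against $\nabla g$ over $\{|\nabla u_i^\varepsilon|\neq 0\}$. The second summand yields exactly the $\int(n_i^\varepsilon\otimes n_i^\varepsilon):\nabla g\,d\xi_t^{i,\varepsilon}$ term on the right-hand side of \eqref{firstvariation}. For the first (bracketed) summand, the integrand $(e_\varepsilon+w_\varepsilon)I-\varepsilon\nabla u_i^\varepsilon\otimes\nabla u_i^\varepsilon$ reduces to $w_\varepsilon I$ on $\{|\nabla u_i^\varepsilon|=0\}$, so extending its integral to all of $\Omega$ produces a correction $-\sigma^{-1}\int_{\{|\nabla u_i^\varepsilon|=0\}} I:\nabla g\,w_\varepsilon\,dx$, which is the last term of \eqref{firstvariation}.

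It then remains to show that
\begin{equation*}
\sigma^{-1}\int_\Omega \bigl[(e_\varepsilon+w_\varepsilon)I - \varepsilon\nabla u_i^\varepsilon\otimes\nabla u_i^\varepsilon\bigr]:\nabla g\,dx
= -\sigma^{-1}\int_\Omega (g\cdot\nabla u_i^\varepsilon)\Bigl(-\varepsilon\Delta u_i^\varepsilon+\tfrac{W'(u_i^\varepsilon)}{\varepsilon}\Bigr)dx.
\end{equation*}
This is the standard diffuse-interface computation: integrate the $(e_\varepsilon+w_\varepsilon)\,\divergence g$ piece by parts, producing $-\int g\cdot\bigl(\varepsilon D^2 u_i^\varepsilon\cdot\nabla u_i^\varepsilon+\tfrac{W'(u_i^\varepsilon)}{\varepsilon}\nabla u_i^\varepsilon\bigr)dx$, and integrate the tensor product piece by parts via $\partial_j(\varepsilon\partial_j u_i^\varepsilon\partial_k u_i^\varepsilon)=\varepsilon\Delta u_i^\varepsilon\,\partial_k u_i^\varepsilon+\varepsilon\partial_j u_i^\varepsilon\partial_j\partial_k u_i^\varepsilon$. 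The Hessian contributions cancel, leaving the claimed identity. No boundary terms arise because $\Omega=\mathbb{T}^d$ and $g$ is compactly supported in $\Omega$.

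The computation is essentially bookkeeping; the only point requiring care is the treatment of the singular set $\{|\nabla u_i^\varepsilon|=0\}$, where $n_i^\varepsilon$ is undefined. The algebraic splitting above is designed precisely to isolate this issue into the single explicit correction term $-\sigma^{-1}\int_{\{|\nabla u_i^\varepsilon|=0\}} I:\nabla g\,\frac{W(u_i^\varepsilon)}{\varepsilon}\,dx$, and on this set the tensor $\varepsilon\nabla u_i^\varepsilon\otimes\nabla u_i^\varepsilon$ vanishes identically, so extending the first integral to all of $\Omega$ is legitimate and the integration by parts can be carried out globally. This is also the content of \cite[Lemma 6.6]{takasaotonegawa} cited in the statement.
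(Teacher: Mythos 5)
Your computation is correct and is exactly the integration-by-parts argument the paper invokes by citing \cite[Lemma 6.6]{takasaotonegawa}: the algebraic splitting $(e_\varepsilon+w_\varepsilon)(I-n_i^\varepsilon\otimes n_i^\varepsilon)=[(e_\varepsilon+w_\varepsilon)I-\varepsilon\nabla u_i^\varepsilon\otimes\nabla u_i^\varepsilon]+(e_\varepsilon-w_\varepsilon)n_i^\varepsilon\otimes n_i^\varepsilon$ checks out, the bracketed term is smooth on all of $\Omega$ (so the global integration by parts and the Hessian cancellation are legitimate for the classical solution $u_i^\varepsilon$), and the restriction to $\{|\nabla u_i^\varepsilon|=0\}$ produces precisely the stated correction term. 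The only cosmetic remark is that the argument needs $g\in C_c^1(\Omega;\mathbb{R}^d)$ (as in the paper's definition of the first variation), not merely $g\in C_c(\Omega;\mathbb{R}^d)$ as the proposition's header states.
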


\begin{lemma}\label{lem3.5}
Assume that $\varepsilon \to 0$ is the positive sequence such that \eqref{eq3.3} holds. Then there exists a subsequence $\varepsilon \to 0$ (denoted by the same character) such that the following hold:
\begin{enumerate}
\item There exists $u_i \in BV _{loc}( \Omega \times [0,T)) \cap C^{\frac{1}{2}} _{loc} ([0,T) ;L^1 (\Omega))$ such that $u_i = 0$ or $1$ a.e. on $\Omega \times [0,T)$ and $u_i^{\varepsilon } \to u_i $ in  $L^1 _{loc} (\Omega \times [0,T)) $ and a.e. pointwise.
\item We have
\begin{equation}
\| \nabla u _i  (\cdot ,t)\| (\phi) \leq \mu _t ^i (\phi)
\label{eq3.4}
\end{equation}
for $i=1,2,\dots , N$, for any $\phi \in C_c (\Omega;[0,T))$, and for any $t \in [0,T)$. 
\item Assume \eqref{assumption}. Then we have
\begin{equation}
\mu _t ^i (\Omega) =\|\nabla u_i (\cdot,t) \| (\Omega)\quad \text{and} \quad \lim_{\varepsilon \to 0} \xi_t ^{i,\varepsilon} =0 \qquad \text{a.e.} \ t\in[0,T), \ i=1,2,\dots, N.
\label{eq3.5}
\end{equation}
\end{enumerate}
\end{lemma}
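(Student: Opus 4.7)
My plan rests on the antiderivative $\Phi(s) := \int_0^s \sqrt{2W(a)}\,da$, which satisfies $\nabla\Phi(u_i^\varepsilon) = \sqrt{2W(u_i^\varepsilon)}\,\nabla u_i^\varepsilon$ and the pointwise Young inequality $|\nabla\Phi(u_i^\varepsilon)| \leq \varepsilon|\nabla u_i^\varepsilon|^2/2 + W(u_i^\varepsilon)/\varepsilon$. Integrating in space therefore gives $\int_\Omega |\nabla\Phi(u_i^\varepsilon)|\,dx \leq \sigma\mu_t^{i,\varepsilon}(\Omega) \leq \sigma D$ uniformly in $\varepsilon$ and $t$. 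For the time direction, Cauchy--Schwarz together with \eqref{eq3.1} yields
\[ \int_{t_1}^{t_2}\!\!\int_\Omega |\partial_t\Phi(u_i^\varepsilon)|\,dx\,dt \;\leq\; \Big(\int_{t_1}^{t_2}\!\!\int_\Omega \tfrac{2W(u_i^\varepsilon)}{\varepsilon}\,dx\,dt\Big)^{1/2}\Big(\int_{t_1}^{t_2}\!\!\int_\Omega \varepsilon|\partial_t u_i^\varepsilon|^2\,dx\,dt\Big)^{1/2} \leq C\sqrt{t_2-t_1}, \]
so $\Phi(u_i^\varepsilon)$ is uniformly bounded in $BV_{loc}(\Omega\times[0,T))\cap C^{1/2}_{loc}([0,T);L^1(\Omega))$. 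A compactness argument along the lines of \cite{mugnai-roger2008} then produces a subsequence with $\Phi(u_i^\varepsilon)\to v_i$ in $L^1_{loc}$ and pointwise a.e.; since $|u^\varepsilon|=1$ confines $u_i^\varepsilon$ to $[-1,1]$, where $\Phi$ is a homeomorphism, the same convergence holds for $u_i^\varepsilon \to u_i := \Phi^{-1}(v_i)$, and the bound $\int W(u_i^\varepsilon)/\varepsilon\leq\sigma D$ forces $W(u_i)=0$ a.e., so $u_i\in\{0,1\}$. This proves (1).

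For (2), I combine the same pointwise bound with lower semicontinuity of total variation under $L^1$-convergence: for $\phi\in C_c(\Omega;\mathbb{R}^+)$,
\[ \sigma\,\|\nabla u_i(\cdot,t)\|(\phi) \;=\; \|\nabla\Phi(u_i(\cdot,t))\|(\phi) \;\leq\; \liminf_{\varepsilon\to 0}\int_\Omega \phi\,|\nabla\Phi(u_i^\varepsilon)|\,dx \;\leq\; \sigma\,\mu_t^i(\phi), \]
where the first equality uses $u_i\in\{0,1\}$ together with $\Phi(1)=\sigma$, $\Phi(0)=0$, hence $\Phi(u_i)=\sigma u_i$. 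Dividing by $\sigma$ yields \eqref{eq3.4}.

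For (3), compactness of $\Omega=\mathbb{T}^d$ upgrades \eqref{eq3.3} to $\mu_t^\varepsilon(\Omega)\to\mu_t(\Omega)$ for every $t$, so the uniform bound $\mu_t^\varepsilon(\Omega)\leq D$ plus dominated convergence turns Assumption A into $\int_0^T \mu_t(\Omega)\,dt = \int_0^T \sum_{i=1}^N \|\nabla u_i(\cdot,t)\|(\Omega)\,dt$. The pointwise inequality from (2) with $\phi\equiv 1$ then forces $\mu_t(\Omega)=\sum_i \|\nabla u_i(\cdot,t)\|(\Omega)$ for a.e.\ $t$, and since each summand individually satisfies $\mu_t^i(\Omega)\geq\|\nabla u_i(\cdot,t)\|(\Omega)$, the componentwise equality follows for a.e.\ $t$. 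At such $t$, setting $A := \varepsilon|\nabla u_i^\varepsilon|^2/2$ and $B := W(u_i^\varepsilon)/\varepsilon$, the identity
\[ \int_\Omega (\sqrt{A}-\sqrt{B})^2\,dx \;=\; \sigma\,\mu_t^{i,\varepsilon}(\Omega) - \int_\Omega |\nabla\Phi(u_i^\varepsilon)|\,dx \;\longrightarrow\; 0 \]
holds because the first term converges to $\sigma\mu_t^i(\Omega)=\sigma\|\nabla u_i(\cdot,t)\|(\Omega)$ while the second has $\liminf\geq\sigma\|\nabla u_i(\cdot,t)\|(\Omega)$ by lsc, forcing an actual limit. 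Finally, $|\xi_t^{i,\varepsilon}|(\Omega)\leq\sigma^{-1}\int_\Omega |\sqrt{A}-\sqrt{B}|(\sqrt{A}+\sqrt{B})\,dx$, and one more Cauchy--Schwarz against $\int_\Omega(A+B)\,dx\leq\sigma D$ yields $\lim_{\varepsilon\to 0}|\xi_t^{i,\varepsilon}|(\Omega)=0$ a.e.\ $t$, completing (3).

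The main obstacle is this final promotion from Assumption A---a single integrated statement about the total energy---to componentwise, pointwise-in-$t$ equalities of masses; once in hand, the equipartition identity $(A+B)-2\sqrt{AB}=(\sqrt{A}-\sqrt{B})^2$ combined with Cauchy--Schwarz disposes of the discrepancy routinely. Everything else is a careful application of the Modica--Mortola $\Phi$-trick together with lower semicontinuity of total variation.
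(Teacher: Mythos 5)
Your proof is correct and follows essentially the same route as the paper's: your $\Phi$ is exactly the paper's transformation $\sigma G$ (Modica--Mortola trick), followed by BV compactness and lower semicontinuity for (1)--(2), and for (3) the same scheme of integrating Assumption A against the componentwise inequality, then the equipartition identity $(\sqrt{A}-\sqrt{B})^2 = A+B-2\sqrt{AB}$ and Cauchy--Schwarz to kill the discrepancy. The only cosmetic difference is that you obtain the $C^{1/2}_{loc}([0,T);L^1(\Omega))$ bound via Cauchy--Schwarz applied uniformly to $\Phi(u_i^{\varepsilon})$, whereas the paper uses a weighted Young inequality and proves the H\"{o}lder estimate for the limit function.
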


\begin{proof}
Step 1. Set $w^\varepsilon _i  := G \circ u _i ^\varepsilon $, where $G  (s) := \sigma ^ {-1} \int _{0 } ^s \sqrt{2W (y) } \, dy $. We remark that $G (-1) =0$ and $G (1) =1$. We compute that
\begin{equation}
|\nabla w_i ^\varepsilon |=\sigma ^{-1} |\nabla u_i ^\varepsilon | \sqrt{2W (u_i ^\varepsilon) } \leq \sigma ^{-1} \Big( \frac{\varepsilon |\nabla u_i ^\varepsilon |^2 }{2} +\frac{W (u_i ^\varepsilon)}{\varepsilon} \Big). 
\label{eq3.6}
\end{equation}
Therefore, by \eqref{eq2.4} and \eqref{eq3.1} we obtain
\begin{equation}
\int _{\Omega} |\nabla w_i ^\varepsilon (\cdot,t) | \, dx \leq \int _{\Omega} \sigma ^{-1} \Big( \frac{\varepsilon |\nabla u_i ^\varepsilon |^2 }{2} +\frac{W (\nabla u_i ^\varepsilon)}{\varepsilon} \Big) \, dx = \mu _t ^{i, \varepsilon} (\Omega) \leq D
\label{bv1}
\end{equation}
for $t \in [0,T)$. By the similar argument, \eqref{eq3.1}, and \eqref{eq3.6} we obtain
\begin{equation}
\begin{split}
\int _0 ^T \int_{\Omega} |\partial _t w_i ^\varepsilon | \, dxdt \leq \sigma ^{-1} \int _0 ^T \int_{\Omega} \Big( \frac{\varepsilon |\partial _t u _i ^\varepsilon |^2 }{2} +\frac{W (u _i ^\varepsilon)}{\varepsilon} \Big) \, dxdt \leq D(1+T).
\end{split}
\label{bv2}
\end{equation}
By \eqref{bv1} and \eqref{bv2}, $\{ w_i ^{\varepsilon_j} \}_{j=1} ^\infty$ is bounded in $BV  (\Omega \times [0,T))$. By the standard compactness theorem and the diagonal argument there is subsequence $\{ w_i ^{\varepsilon _j} \}_{j=1} ^\infty$ (denoted by the same index) and $w _i \in BV_{loc} (\Omega \times [0,T))$ such that
\begin{equation}
w^{\varepsilon  } _i \to w _i \quad \text{in} \ L^1 _{loc} (\Omega \times [0,T))
\label{wconv}
\end{equation}
and a.e. pointwise, for $ i=1,2, \dots, N$. We denote $u_i (x,t) := \lim _{\varepsilon \to 0} u^{\varepsilon } _i (x,t)=\lim _{\varepsilon \to 0} G^{-1} \circ w^{\varepsilon } _i (x,t)$. Then we have
\[ u_i^{\varepsilon } \to u_i \quad \text{in} \ L^1 _{loc} (\Omega \times [0,T)) \]
and a.e. pointwise, for $ i=1,2, \dots, N$. We have $u_i =1$ or $=0$ a.e. on $\Omega \times [0,T)$ by the boundedness of $\int _{\Omega} \frac{W (u_ i ^{\varepsilon}) }{\varepsilon_j} \, dx$. Moreover $u_i =w_i$ a.e. on $\Omega \times [0,T)$. Thus $u_i \in BV_{loc} (\Omega \times [0,T))$. For any open set $U \subset \Omega$ and a.e. $0\leq t_1 < t_2<T$ we have
\begin{equation}
\begin{split}
&\int_U |u_i (\cdot ,t_2) -u_i (\cdot ,t_1)| \, dx \leq \lim _{\varepsilon \to 0} \int _{\Omega} |w_i ^{\varepsilon } (\cdot ,t_2) -w_i ^{\varepsilon } (\cdot ,t_1)| \, dx \\
\leq & \liminf _{\varepsilon \to 0} \int_\Omega \int _{t_1} ^{t_2} |\partial _t w_i ^{\varepsilon }| \, dtdx \leq \liminf_{\varepsilon \to 0} \int_{\Omega} \int _{t_1} ^{t_2} \Big( \frac{\varepsilon |\partial _t u_i ^{\varepsilon } |^2 }{2}\sqrt{t_2 -t_1} +\frac{W (u_i ^{\varepsilon})}{\varepsilon \sqrt{t_2 -t_1}} \Big) \, dtdx \\
\leq & C D \sqrt{t_2 -t_1},
\end{split}
\label{eq3.10}
\end{equation}
where $C= C (d,T)>0$. By \eqref{eq3.10}, we may define $u_i (\cdot ,t)$ for any $t \in [0,T)$ such that $u_i \in C^{\frac{1}{2}} _{loc} ([0,T) ; L^1 (\Omega))$. For $\phi \in C_c (\Omega ;[0,T))$ and $t \in[0,T)$ we compute that
\begin{equation}
\begin{split}
&\int _{\Omega} \phi \, d\| \nabla u _i (\cdot , t) \| \leq \liminf _{\varepsilon \to 0} \int _{\Omega} \phi |\nabla w _i ^{\varepsilon  } | \, dx\\
\leq & \lim _{\varepsilon \to 0}\sigma ^{-1} \int _{\Omega} \phi \Big( \frac{\varepsilon |\nabla u_i ^{\varepsilon } |^2 }{2} +\frac{W (u_i ^{\varepsilon })}{\varepsilon} \Big) \, dx = \int _{\Omega} \phi \, d\mu _t ^i, \quad i=1,2,\dots, N.
\end{split}
\label{eq3.12}
\end{equation}
Therefore we obtain \eqref{eq3.4}.

\bigskip

\noindent
Step 2. By an argument similar to that in \eqref{eq3.12}, we have
\begin{equation}
\| \nabla u _i  (\cdot ,t)\| (\Omega) \leq \mu _t ^i (\Omega)
\label{eq3.12'}
\end{equation}
for $t \in [0,T)$ and $i=1,2,\dots, N$. Assume \eqref{assumption}. By \eqref{assumption} and \eqref{eq3.12'}, we have
\begin{equation}
\mu _t ^i (\Omega) =\|\nabla u_i (\cdot,t) \| (\Omega) \qquad \text{a.e.} \ t\in [0,T), \ i=1,2,\dots, N.
\label{eq3.11}
\end{equation}
Set $a^{\varepsilon} _i (x,t) = \sqrt{ \frac{\varepsilon |\nabla u_i ^{\varepsilon }(x,t) |^2 }{2} }$ and $b^{\varepsilon} _i (x,t) = \sqrt{ \frac{W (u_i ^{\varepsilon } (x,t) )}{\varepsilon } }$. Since $\phi_1 \equiv 1$ belongs to $C_c (\Omega)=C_c (\mathbb{T}^d)$, 
\begin{equation}
\mu _t ^{i,\varepsilon} (\Omega) =\mu _t ^{i,\varepsilon} (\phi_1)\to \mu _t ^{i} (\phi_1)=\mu _t ^{i} (\Omega) \qquad \text{for any} \ t\in [0,T), \ i=1,2,\dots, N. 
\label{eq3.11'}
\end{equation}
Fix $t\in[0,T)$ such that \eqref{eq3.11} holds. Then by \eqref{eq3.6}, \eqref{eq3.11} and \eqref{eq3.11'}, and \eqref{eq3.12} with $\phi \equiv 1$ we obtain
\begin{equation*}
\begin{split}
\limsup _{\varepsilon \to 0} \int_{\Omega}  (a^{\varepsilon} _i  - b^{\varepsilon} _i )^2 dx &= \limsup _{\varepsilon \to 0} \int_{\Omega} \{ (a^{\varepsilon} _i)^2  + (b^{\varepsilon} _i )^2  -2a^{\varepsilon} _i  b^{\varepsilon} _i\}dx\\
&= \limsup_{\varepsilon \to 0} \sigma \Big(\mu _t ^{i,\varepsilon } (\Omega ) - \int _{\Omega} |\nabla w_i ^\varepsilon | \, dx\Big)\\
&= \sigma \Big( \lim_{\varepsilon \to 0} \mu _t ^{i,\varepsilon } (\Omega) -\liminf_{\varepsilon \to 0} \int _{\Omega}  |\nabla w_i ^\varepsilon | \, dx\Big)\\
& \leq \sigma ( \mu _t ^{i} (\Omega) - \| \nabla u_i (\cdot ,t) \| (\Omega) )=0.
\end{split}
\end{equation*}
Thus we have
\begin{equation*}
\begin{split}
\limsup _{\varepsilon \to 0} |\xi _t ^{i,\varepsilon } | (\Omega )&= \sigma ^{-1} \limsup _{\varepsilon \to 0} \int_{\Omega} | (a^{\varepsilon} _i)^2  - (b^{\varepsilon} _i )^2| dx =\sigma ^{-1} \limsup _{\varepsilon \to 0} \int_{\Omega} |a^{\varepsilon} _i  + b^{\varepsilon} _i | \cdot |a^{\varepsilon} _i  - b^{\varepsilon} _i | dx \\
&\leq \sigma ^{-1} \limsup _{\varepsilon \to 0} \Big( \int_{\Omega}  (a^{\varepsilon} _i  + b^{\varepsilon} _i )^2 dx\Big)^{\frac12} \Big( \int_{\Omega}  (a^{\varepsilon} _i  - b^{\varepsilon} _i )^2 dx\Big) ^{\frac12}\\
&\leq \sigma ^{-1} C D^{\frac12} \limsup _{\varepsilon \to 0}  \Big( \int_{\Omega}  (a^{\varepsilon} _i  - b^{\varepsilon} _i )^2 dx\Big) ^{\frac12}=0.
\end{split}
\end{equation*}
Thus we obtain \eqref{eq3.5}.

\end{proof}

\begin{lemma}\label{lem3.6}
Assume \eqref{assumption}. Then there exists a family of varifolds $\{ V_t \}_{t\in [0,T)}$ such that for a.e. $t\in [0,T)$, $V_t \in \mathbf{IV}_{d-1} (\Omega)$, $\|V_t\| = \mu_t$, $V_t$ has a generalized mean curvature vector $h$, and $h$ satisfies
\begin{equation}
\int _{\Omega} |h| ^2 \, d \mu _t \leq \sigma ^{-1}\liminf _{\varepsilon\to 0} \int _{\Omega} \varepsilon |\partial _t u ^\varepsilon (x,t) |^2 \, dx<\infty
\label{eq3.19}
\end{equation}
and
\begin{equation}
\begin{split}
\delta V_t (g) &= -\int_{\Omega} h \cdot g \, d\mu _t \\
&= -\sigma^{-1}\lim_{\varepsilon \to 0} \sum _{i=1} ^N \int _{\Omega} (g\cdot \nabla u_i ^\varepsilon) \Big(-\varepsilon \Delta u_i ^\varepsilon +\frac{W'(u^\varepsilon _i )}{\varepsilon}\Big) \, dx\\
&= \sigma^{-1} \lim_{\varepsilon \to 0} \sum_{i=1} ^N \int _{\Omega} \varepsilon \partial _t u^\varepsilon _i \nabla u^\varepsilon _i \cdot g \, dx  , \quad g \in (C_c ^1 (\Omega))^d.
\label{eq3.13}
\end{split}
\end{equation}
Moreover $h \in L^2 (0,T ;L^2 (\mu_t ;\mathbb{R}^d))$.
\end{lemma}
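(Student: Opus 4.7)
The plan is to construct $V_t$ as a subsequential varifold limit of the approximating varifolds $V_t^\varepsilon := \sum_{i=1}^N V_t^{i,\varepsilon}$ and to identify its mass, first variation, and integrality using the integration-by-parts formula \eqref{firstvariation} together with the energy estimate of Proposition \ref{prop3.1} and the vanishing discrepancy from Lemma \ref{lem3.5}(3).

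First, since $\|V_t^\varepsilon\|(\Omega)\le \mu_t^\varepsilon(\Omega)\le D$ uniformly in $\varepsilon$ and $t$, Banach--Alaoglu with a diagonal extraction along a countable dense family in $C_c(G_{d-1}(\Omega))$, in the same spirit as Lemma \ref{lem3.2}, yields a subsequence and a family $\{V_t\}_{t\in[0,T)}$ with $V_t^\varepsilon\rightharpoonup V_t$ as varifolds for a.e.\ $t$. The identification $\|V_t\|=\mu_t$ follows from
\[
0\le \mu_t^{i,\varepsilon}(\Omega)-\|V_t^{i,\varepsilon}\|(\Omega)=\sigma^{-1}\int_{\{|\nabla u_i^\varepsilon|=0\}}\frac{W(u_i^\varepsilon)}{\varepsilon}\,dx\le |\xi_t^{i,\varepsilon}|(\Omega)\to 0
\]
a.e.\ $t$ by Lemma \ref{lem3.5}(3), combined with $\mu_t^\varepsilon\to\mu_t$ from Lemma \ref{lem3.2}.

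For the first variation, I would sum \eqref{firstvariation} in $i$ and pass $\varepsilon\to 0$. The middle term is dominated by $\|\nabla g\|_\infty |\xi_t^{i,\varepsilon}|(\Omega)\to 0$, and the last term vanishes by the previous display; varifold convergence supplies $\delta V_t^\varepsilon(g)\to\delta V_t(g)$ on the left, giving the first identity of \eqref{eq3.13}. Substituting \eqref{eq3.2} converts it into the $\partial_t u_i^\varepsilon$ representation. Applying the Cauchy--Schwarz inequality,
\[
|\delta V_t(g)|^2\le \sigma^{-2}\liminf_{\varepsilon\to 0}\Big(\int\varepsilon|\partial_t u^\varepsilon|^2\,dx\Big)\Big(\sum_i\int\varepsilon|\nabla u_i^\varepsilon|^2|g|^2\,dx\Big),
\]
and since $\int\varepsilon|\nabla u_i^\varepsilon|^2|g|^2\,dx = \sigma\mu_t^{i,\varepsilon}(|g|^2)+\sigma\xi_t^{i,\varepsilon}(|g|^2)\to \sigma\mu_t^i(|g|^2)$ by Lemma \ref{lem3.5}(3), the second factor tends to $\sigma\mu_t(|g|^2)$. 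Hence $|\delta V_t(g)|^2\le \sigma^{-1}\bigl(\liminf_{\varepsilon\to 0}\int\varepsilon|\partial_t u^\varepsilon|^2\,dx\bigr)\mu_t(|g|^2)$, and the Riesz representation theorem on $L^2(\mu_t)$ furnishes $h\in L^2(\mu_t;\mathbb{R}^d)$ with $\delta V_t=-h\mu_t$ and the bound \eqref{eq3.19}; integrating in $t$ using \eqref{eq3.1} produces $h\in L^2(0,T;L^2(\mu_t;\mathbb{R}^d))$.

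The principal obstacle is integrality of $V_t$. The strategy is to apply the integrality theorem of Tonegawa (and its refinement by R\"oger--Tonegawa) for scalar Allen--Cahn limits componentwise to each $V_t^i$, the subsequential varifold limit of $V_t^{i,\varepsilon}$: the required uniform energy bound, $L^2(\mu_t^{i,\varepsilon})$ control of the approximate mean curvature $-\varepsilon\Delta u_i^\varepsilon+W'(u_i^\varepsilon)/\varepsilon$, and vanishing discrepancy are supplied by \eqref{eq3.1}, the equation \eqref{ac}, and Lemma \ref{lem3.5}(3), respectively. The coupling through the Lagrange multiplier $\lambda^\varepsilon u_i^\varepsilon$ in \eqref{ac} contributes an extra term that one must absorb as an $L^2$-perturbation of the classical approximate mean curvature, and verifying that this perturbation does not invalidate the integrality argument is the technically delicate point. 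Once each $V_t^i\in\mathbf{IV}_{d-1}(\Omega)$ is established a.e.\ $t$, the sum $V_t=\sum_i V_t^i$ is integral a.e.\ $t$, completing the proof.
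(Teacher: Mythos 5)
Most of your proposal runs parallel to the paper's proof and is sound: the compactness of $V_t^{\varepsilon}$, the identification $\|V_t\|=\mu_t$ via the vanishing of $\sigma^{-1}\int_{\{|\nabla u_i^{\varepsilon}|=0\}}W(u_i^{\varepsilon})/\varepsilon\,dx$, the passage to the limit in \eqref{firstvariation} combined with \eqref{eq3.2} to get \eqref{eq3.13}, and the Cauchy--Schwarz/Riesz argument (using $\varepsilon|\nabla u^{\varepsilon}|^2dx=\sigma(d\mu_t^{\varepsilon}+d\xi_t^{\varepsilon})$ and Lemma \ref{lem3.5}(3)) giving $h\in L^2(\mu_t;\R^d)$ and \eqref{eq3.19} are essentially the paper's steps. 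The genuine gap is exactly the step you flag as ``technically delicate'': integrality. A componentwise application of Tonegawa's (or R\"oger--Tonegawa's) integrality theorem requires a uniform bound of the form $\sup_{\varepsilon}\int_\Omega \varepsilon^{-1}\bigl(-\varepsilon\Delta u_i^{\varepsilon}+W'(u_i^{\varepsilon})/\varepsilon\bigr)^2dx<\infty$ for a.e.\ $t$. From \eqref{ac}, $-\varepsilon\Delta u_i^{\varepsilon}+W'(u_i^{\varepsilon})/\varepsilon=-\varepsilon\partial_t u_i^{\varepsilon}+\lambda^{\varepsilon}u_i^{\varepsilon}$, and \eqref{eq3.1} controls only $\int\varepsilon|\partial_t u^{\varepsilon}|^2dx$. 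Since $|u^{\varepsilon}|=1$, one has pointwise $|\nabla_{L^2}E(u^{\varepsilon})|^2=\varepsilon^2|\partial_t u^{\varepsilon}|^2+(\lambda^{\varepsilon})^2$: the equation and energy identity control precisely the projection of $\nabla_{L^2}E(u^{\varepsilon})$ tangent to the sphere and say nothing about the normal component. So the bound you need is equivalent to $\sup_{\varepsilon}\int_\Omega(\lambda^{\varepsilon})^2/\varepsilon\,dx<\infty$, for which no estimate exists in the paper or follows from \eqref{ac} in any evident way; the cancellations that rescue the other steps, such as $\sum_i\lambda^{\varepsilon}u_i^{\varepsilon}\nabla u_i^{\varepsilon}\cdot g=\lambda^{\varepsilon}g\cdot\nabla(|u^{\varepsilon}|^2)/2=0$ behind \eqref{eq3.2}, occur only after summation against $\nabla u_i^{\varepsilon}$ and never control $\lambda^{\varepsilon}$ itself. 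Absorbing $\lambda^{\varepsilon}u_i^{\varepsilon}$ as an ``$L^2$-perturbation'' is therefore not a loose end but the whole problem.

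The paper sidesteps this entirely: integrality is deduced from Assumption A, not from an $\varepsilon$-level curvature bound. By \eqref{eq3.4} and \eqref{eq3.5}, for a.e.\ $t$ one has $\|\nabla u_i(\cdot,t)\|\le\mu_t^i$ together with equality of total masses, hence $\mu_t^i=\|\nabla u_i(\cdot,t)\|$ as measures; thus $\mu_t=\sum_i\|\nabla u_i(\cdot,t)\|$ is a finite sum of perimeter measures of the sets $\{u_i(\cdot,t)=1\}$, i.e.\ a countably $(d-1)$-rectifiable measure with integer-valued $\mathcal{H}^{d-1}$-density. Allard's rectifiability theorem, applicable thanks to the first-variation bound \eqref{eq3.15} and the density restriction \eqref{eq3.18}, gives $V_t\in\mathbf{RV}_{d-1}(\Omega)$, and since a rectifiable varifold is determined by its mass measure, $V_t$ is the (unique) integral varifold associated with $\mu_t$. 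This uniqueness also repairs a secondary defect in your write-up: a diagonal extraction cannot produce one subsequence along which $V_t^{\varepsilon}$ converges for the uncountably many a.e.\ $t$; the paper extracts a subsequence for each fixed good $t$, invokes uniqueness of the limit, and defines $V_t$ by an arbitrary fixed plane on the exceptional null set of $t$. If you wish to keep your route, you must first produce the missing estimate on $\lambda^{\varepsilon}$, and nothing in the structure of \eqref{ll} suggests one is available.
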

\begin{proof} By \eqref{eq3.1} and Fatou's lemma, 
\begin{equation}
c(t):=\liminf_{\varepsilon \to 0}\int _{\Omega} \varepsilon |\partial _t u ^\varepsilon (x,t)|^2 \, dx <\infty
\label{eq3.14}
\end{equation}
for a.e. $t\in [0,T)$.
Assume \eqref{assumption}. Then \eqref{eq3.5} holds for a.e. $t \in [0,T)$. Fix $t\in [0,T)$ such that $\xi_t ^{i,\varepsilon} \to 0$ and \eqref{eq3.14} holds. By $\|V_t ^\varepsilon \| (\Omega)= \mu _t ^\varepsilon (\Omega) \leq D$ for any $\varepsilon >0$ and the compactness of the Radon measures, there exist $V_t \in \mathbf{V} _{d-1} (\Omega)$ and a subsequence $\varepsilon \to 0$ (denoted by the same character) such that $\|V_t \| =\mu _t$ and $V_t ^\varepsilon \to V_t$ as Radon measures. By \eqref{assumption} we have
\begin{equation}
\begin{split}
\mathcal{H}^{d-1} (\spt \|V_t \|) &= \mathcal{H}^{d-1} (\spt \mu_t ) =\mathcal{H}^{d-1} (\spt \|\nabla u (\cdot,t) \| ) \\
& \leq \sum_{i=1} ^N  \| \nabla u_i (\cdot, t) \|(\Omega) = \mu_t  (\Omega) <\infty.
\end{split}
\label{eq3.16}
\end{equation}
%Let $x\in \spt \| V_t \|$. Then $x\in \cup _{i=1} ^N \partial ^\star \{ x \, | \, u _i (x,t) =1  \}$ and $\limsup _{r\downarrow 0} \frac{\|V_t\| (B_r (x)) }{\omega _{d-1}r^{d-1}} >0$ $\mathcal{H}^{d-1}$-a.e. Thus
By \eqref{eq3.16} and a standard measure theoretic argument (see \cite[Proposition 6.1]{takasaotonegawa}) we have
\begin{equation}
\begin{split}
V_t = V_t \lfloor _{ \Big\{ x\in \Omega \, \Big| \, \limsup _{r\downarrow 0} \frac{\|V_t\| (B_r (x)) }{\omega _{d-1}r^{d-1}}  >0\Big\} \times G_{d-1} (\mathbb{R}^d) }.
\end{split}
\label{eq3.18}
\end{equation}

For any  $g \in (C_c(\Omega) )^d$, we compute that
\begin{equation*}
\begin{split}
 &\int_{\Omega\cap\{ |\nabla u _i ^\varepsilon |\not=0 \}}  (n _i ^\varepsilon \otimes n _i ^\varepsilon) : \nabla g \,d\xi _t ^{i,\varepsilon}
-\sigma^{-1}\int_{\Omega\cap\{ |\nabla u_i ^\varepsilon |=0 \}} I : \nabla g  \frac{W(u_i ^\varepsilon )}{\varepsilon} \, dx\\
=& \int_{\Omega\cap\{ |\nabla u _i ^\varepsilon |\not=0 \}}  (n _i ^\varepsilon \otimes n _i ^\varepsilon) : \nabla g \,d\xi _t ^{i,\varepsilon} +\int_{\Omega\cap\{ |\nabla u_i ^\varepsilon |=0 \}} I : \nabla g  \, d \xi _t ^{i,\varepsilon} \to 0.
\end{split}
\end{equation*}
By this, \eqref{eq3.2}, and \eqref{firstvariation}, we have
\begin{equation}
\begin{split}
\delta V _t (g) =& \int _{ G_{d-1} (\Omega)} S:\nabla g (x)  \, dV_t (x,S) =\lim _{\varepsilon \to 0}  \sum _{i=1} ^N \int_{\Omega\cap\{ |\nabla u_i ^\varepsilon |\not=0 \}} ( I-n_i ^\varepsilon\otimes n _i ^\varepsilon ) :\nabla g \, d\mu _t ^{i,\varepsilon}\\
=& -\sigma^{-1}\lim_{\varepsilon \to 0} \sum _{i=1} ^N \int _{\Omega} (g\cdot \nabla u_i ^\varepsilon) \Big(-\varepsilon \Delta u_i ^\varepsilon +\frac{W'(u^\varepsilon _i )}{\varepsilon}\Big) \, dx
\end{split}
\label{eq3.17}
\end{equation}  
for any $g \in (C_c ^1 (\Omega) )^d$. Moreover, by \eqref{eq3.14} and \eqref{eq3.17} we obtain
\begin{equation}
\begin{split}
| \delta V_t  (g)| = & \sigma^{-1}\lim_{\varepsilon \to 0} \Big| \sum _{i=1} ^N \int _{\Omega} (g\cdot \nabla u_i ^\varepsilon) \Big(-\varepsilon \Delta u_i ^\varepsilon +\frac{W'(u^\varepsilon _i )}{\varepsilon}\Big) \, dx\Big|\\
= & \sigma^{-1}\lim_{\varepsilon \to 0} \Big| \sum_{i=1} ^N \int _{\Omega} \varepsilon \partial _t u^\varepsilon _i \nabla u^\varepsilon _i \cdot g \, dx \Big|\\
\leq & C \| g\|_{C^0 (\Omega)} \liminf_{\varepsilon \to 0} \Big( \mu _t ^{\varepsilon} (\Omega)  \Big)^{\frac12} \Big(  \int _{\Omega} \varepsilon |\partial _t u ^\varepsilon|^2 \, dx  \Big)^{\frac12}  \\
\leq& CD^{\frac{1}{2}} c(t) \| g\|_{C^0 (\Omega)}
\end{split}
\label{eq3.15}
\end{equation}  
for any $g \in (C_c ^1 (\Omega) )^d$.  

By \eqref{eq3.18}, \eqref{eq3.15}, and Allard's rectifiability theorem (see \cite[Theorem 5.5]{allard}), $V_t \in \mathbf{RV} _{d-1} (\Omega) $.  Moreover we have $V_t \in \mathbf{IV} _{d-1} (\Omega)$ and $V_t$ is determined uniquely by $\|V_t\| =\mu_t$ and \eqref{assumption}.

By \eqref{eq3.15} and the Riesz representation theorem, $V_t$ has a generalized mean curvature vector $h$. By this, \eqref{eq3.2}, and \eqref{eq3.17} we have \eqref{eq3.13}. Let $\phi \in C_c ^2 (\Omega ;[0,\infty))$ with $\phi ^\frac12 \in C^1$. Then by an argument similar to that in \cite{Ilmanen} we have
\begin{equation}
\Big( \int_ \Omega \phi |h|^2 \, d \mu _t\Big) ^\frac12 =\sup \Big\{  \int_\Omega \phi ^\frac12 h\cdot g \, d\mu_t \, \Big| \, g \in C_c ^\infty (\Omega;\mathbb{R}^d), \ \|g \|_{L^2 (\mu_t)} \leq 1  \Big\}.
\label{eq3.20}
\end{equation}
For any $g\in C_c ^\infty (\Omega ; \mathbb{R}^d)$ we compute that
\begin{equation}
\begin{split}
 \int_\Omega \phi ^\frac12 h\cdot g \, d\mu_t &=-\delta V_t (\phi ^\frac12 g)=-\sigma^{-1} \lim_{\varepsilon \to 0} \sum_{i=1} ^N \int _{\Omega} \varepsilon \partial _t u^\varepsilon _i \nabla u^\varepsilon _i \cdot \phi^\frac12 g \, dx\\
&\leq \sigma^{-1} \liminf_{\varepsilon \to 0} \int _{\Omega} \varepsilon \Big( \sum _{i=1} ^N ( \phi ^{\frac{1}{2}} \partial _t u^\varepsilon _i) ^2 \Big)^{\frac{1}{2}} \Big( \sum_{i=1} ^N \Big( \sum_{j=1} ^d \partial _{x_j} u _i ^\varepsilon g ^j \Big)^2 \Big)^{\frac{1}{2}} \, dx\\
&\leq  \liminf_{\varepsilon \to 0}
\Big( \sigma ^{-1}\int _{\Omega} \varepsilon \phi |\partial _t u^\varepsilon  |^2 \, dx\Big)^\frac12 \Big( \sigma ^{-1} \int _{\Omega} |g|^2 \varepsilon |\nabla u^\varepsilon  |^2 \, dx\Big)^\frac12\\
&=  \liminf_{\varepsilon \to 0}
\Big( \sigma ^{-1}\int _{\Omega} \varepsilon \phi |\partial _t u^\varepsilon  |^2 \, dx\Big)^\frac12 \lim _{\varepsilon \to 0}\Big( \int _{\Omega} |g|^2  \, d \mu_t ^\varepsilon + \int _{\Omega} |g|^2  \, d \xi_t ^\varepsilon \Big)^\frac12\\
&= \liminf_{\varepsilon \to 0}
\Big( \sigma ^{-1}\int _{\Omega} \varepsilon \phi |\partial _t u^\varepsilon  |^2 \, dx\Big)^\frac12  \| g \|_{L^2 (\mu_t)},
\end{split}
\label{eq3.21}
\end{equation}
where \eqref{eq3.5} and \eqref{eq3.15} are used. Note that $\frac{\varepsilon}{\sigma} |\nabla u^\varepsilon |^2 dx = d\mu_t ^\varepsilon + d\xi_t ^\varepsilon$. By \eqref{eq3.20} and \eqref{eq3.21} we have $h \in L^2 (\mu_t ;\mathbb{R}^d)$ and \eqref{eq3.19}. Similarly, we have $h \in L^2 (0,T ;L^2 (\mu_t ;\mathbb{R}^d))$ by \eqref{eq3.1}.

For $t \in [0,T)$ such that \eqref{eq3.5} or \eqref{eq3.14} does not hold, we define  $V_t$ by $V_t  (\phi) :=\int_{\Omega} \phi (x, P_1) \, d\mu _t $ for $ \phi \in C_c(G_{d-1} (\Omega))$. Here $P_1=\{ x \in \mathbb{R}^d \, | \, x_d =0 \}$.
\end{proof}

\begin{proof}[Proof of Theorem \ref{mainresults}] From Lemma \ref{lem3.2}, Lemma \ref{lem3.5}, and Lemma \ref{lem3.6}, we only need to prove (c) and (d). First we show Brakke's inequality (c). Let $\phi \in C _c ^2 (\Omega \times [0,T);\mathbb{R}^+)$ and $0\leq t_1 < t_2 <T$. By the integration by parts, we have
\begin{equation}
\begin{split}
\frac{d}{dt} \int _{\Omega} \phi \, d\mu_t ^\varepsilon 
=& \int _\Omega \partial _t \phi \, d\mu _t ^\varepsilon 
+\sigma ^{-1}\sum _{i=1} ^N \int _{\Omega} \Big( \varepsilon \phi ( - \partial _t u_i ^\varepsilon +\lambda ^\varepsilon u_i ^\varepsilon )\partial _t u_i ^\varepsilon - \varepsilon (\nabla \phi \cdot \nabla u_i ^\varepsilon ) \partial_t u_i ^\varepsilon \Big) \, dx\\
=& \int _\Omega \partial _t \phi  \, d\mu _t ^\varepsilon 
+\sigma ^{-1}\int _{\Omega} \Big(-\varepsilon \phi |\partial _t u ^\varepsilon|^2 - \sum _{i=1} ^N  \varepsilon (\nabla \phi \cdot \nabla u_i ^\varepsilon ) \partial_t u_i ^\varepsilon \Big)\, dx.
\end{split} 
\end{equation}
Here \eqref{eq1.2} is used. By \eqref{eq3.3}, we have
\begin{equation*}
\lim _{j \to \infty } \int _{\Omega} \phi \, d \mu_t ^{\varepsilon_j} \Big| _{t=t_1} ^{t_2} = \int _{\Omega} \phi \, d\mu_t \Big| _{t=t_1} ^{t_2} \quad \text{and} \quad \lim _{j \to \infty } \int _{t_1} ^{t_2} \int _{\Omega} \partial _t \phi \, d \mu_t ^{\varepsilon_j} dt = \int_{t_1} ^{t_2} \int _{\Omega} \partial_ t \phi \, d\mu_t dt.
\end{equation*}
Thus we only need to show
\begin{equation}
\int_{t_1} ^{t_2} \int _{\Omega} (\phi |h|^2 - \nabla \phi \cdot h) \, d\mu_t dt
\leq
\sigma ^{-1}\lim _{j\to \infty} \int_{t_1} ^{t_2} \int _{\Omega} \Big( \varepsilon_j \phi |\partial _t u ^{\varepsilon_j} |^2 + \sum _{i=1} ^N  \varepsilon (\nabla \phi \cdot \nabla u_i ^{\varepsilon_j} ) \partial _t u_i ^{\varepsilon_j} \Big)\, dxdt .
\label{eq3.24}
\end{equation}
There exists $C=C(\|\eta \|_{C^2 (\Omega)}) >0$ such that $\sup _{\Omega}\frac{|\nabla \eta |^2}{\eta} \leq C$, for any $\eta \in C_c ^2 (\Omega;\mathbb{R}^+)$. Therefore we have
\begin{equation}
\begin{split}
& \int _{\Omega} \Big( \varepsilon_j \phi |\partial _t u ^{\varepsilon_j}|^2 + \sum _{i=1} ^N  \varepsilon_j (\nabla \phi \cdot \nabla u_i ^{\varepsilon_j} ) \partial _t u_i ^{\varepsilon_j} \Big)\, dx \\
\geq & \sum _{i=1} ^N \int _{\Omega} \Big( \varepsilon_j \phi \Big( \partial _t u_i ^{\varepsilon_j} + \frac{\nabla \phi \cdot \nabla u_i ^{\varepsilon_j}}{2\phi} \Big) ^2  -  \frac{\varepsilon _j (\nabla \phi \cdot \nabla u_i ^{\varepsilon_j})^2}{4\phi} \Big)\, dx\\
\geq & -C(\|\phi \|_{C^2 (\Omega)}) D
\end{split}
\label{eq3.25}
\end{equation}
for any $t \in [0,T)$ and $\varepsilon \in (0,1)$. Fatou's Lemma and \eqref{eq3.25} imply
\begin{equation}
\begin{split}
& \int_{t_1} ^{t_2} \liminf _{j\to \infty}  \int _{\Omega} \Big( \varepsilon_j \phi |\partial _t u ^{\varepsilon_j} |^2 + \sum _{i=1} ^N  \varepsilon (\nabla \phi \cdot \nabla u_i ^{\varepsilon_j} ) \partial _t u_i ^{\varepsilon_j} \Big)\, dxdt \\
\leq &
\lim _{j\to \infty} \int_{t_1} ^{t_2} \int _{\Omega} \Big( \varepsilon_j \phi |\partial _t u ^{\varepsilon_j} |^2 + \sum _{i=1} ^N  \varepsilon (\nabla \phi \cdot \nabla u_i ^{\varepsilon_j} ) \partial_t u_i ^{\varepsilon_j} \Big)\, dxdt .
\end{split}
\label{eq3.26}
\end{equation}
By \eqref{eq3.5} and \eqref{eq3.26} for a.e. $t\in [0,T)$ we have
\begin{equation}
\lim _{j\to \infty}\xi _t ^{\varepsilon_j} = 0 \quad \text{and} \quad \liminf _{j\to \infty}  \int _{\Omega} \Big( \varepsilon_j \phi |\partial _t u ^{\varepsilon_j} |^2 + \sum _{i=1} ^N  \varepsilon (\nabla \phi \cdot \nabla u_i ^{\varepsilon_j} ) \partial_t u_i ^{\varepsilon_j} \Big)\, dx <\infty.
\label{eq3.27}
\end{equation}
We fix such $t\in [0,T)$. We compute that
\begin{equation}
\varepsilon_j \phi |\partial _t u ^{\varepsilon_j} |^2 + \sum _{i=1} ^N  \varepsilon (\nabla \phi \cdot \nabla u_i ^{\varepsilon_j} ) \partial _t u_i ^{\varepsilon_j} \geq \frac{1}{2} \varepsilon_j \phi |\partial _t u ^{\varepsilon_j} |^2 -C(\|\phi \|_{C^2 (\Omega)}) D.
\label{eq3.28}
\end{equation}
By \eqref{eq3.27} and \eqref{eq3.28} we have
\begin{equation}
\liminf _{j\to \infty}  \int _{\Omega} \varepsilon_j \phi |\partial _t u ^{\varepsilon_j} |^2 \, dx <\infty.
\label{eq3.29}
\end{equation}
Hence, by an argument similar to that in Lemma \ref{lem3.6}, $V_t ^{\varepsilon _j}\lfloor _{\{ \phi >0 \}} \to V_t \lfloor _{\{ \phi >0 \}} $,
\begin{equation}
\int _{\Omega} \phi |h| ^2 \, d \mu _t \leq \sigma ^{-1} \liminf _{j\to \infty}  \int _{\Omega} \varepsilon_j \phi |\partial _t u ^{\varepsilon_j} |^2 \, dx
\label{eq3.30}
\end{equation}
and \eqref{eq3.13} holds for any $g \in (C_c ^1 (\{ \phi >0\}))^d$. Thus
\begin{equation}
-\int _{\Omega} \nabla \tilde \phi \cdot h \, d\mu_t = 
\sigma ^{-1}\lim _{j\to \infty} \int _{\Omega} \sum _{i=1} ^N  \varepsilon (\nabla \tilde \phi \cdot \nabla u_i ^{\varepsilon_j} ) \partial u_i ^{\varepsilon_j} \, dx, \quad  \tilde \phi \in (C_c ^2 (\{ \phi >0\}))^d.
\label{eq3.31}
\end{equation}
By \eqref{eq3.30} and \eqref{eq3.31} with $\tilde \phi \to \phi $ in $C^2$, we obtain \eqref{eq3.24}. Therefore we have (c).

Finally we show (d). In the same manner as \cite[3.3. Monotonicity formula]{Ilmanen} and \cite[Proposition 4.1]{takasaotonegawa}, we have
\begin{equation}
\begin{split}
&\frac{d}{dt} \int _{\mathbb{R}^d} \rho \, d\mu _t ^{i, \varepsilon}(x) \\
=& -\int _{\mathbb{R}^d} \varepsilon \rho \Big( -\Delta u_i ^{\varepsilon} +\frac{W'  (u_i ^{\varepsilon}) }{\varepsilon ^2} -\frac{\nabla u_i ^{\varepsilon} \cdot \nabla \rho}{\rho} \Big)^2 \, d\mu _t^{i, \varepsilon}(x) + \frac{1}{2(s-t)}\int _{\mathbb{R}^d}  \rho \, d\xi _t ^{i, \varepsilon}(x) \\
&+ \frac{1}{2\sigma}\int _{\mathbb{R}^d} \rho \lambda ^\varepsilon \partial _t  (u_i ^\varepsilon )^2 \, dx + \frac{1}{2\sigma}\int _{\mathbb{R}^d} \lambda ^\varepsilon \nabla \rho \cdot \nabla  (u_i ^\varepsilon )^2 \, dx
\end{split}
\label{monoton2}
\end{equation}
for $y\in \mathbb{R}^d$, $t<s$ with $t \in [0,T)$, and $i=1,2,\dots, N$. 
Note that
\begin{equation}
\sum_{i=1} ^N \Big( \frac{1}{2\sigma}\int _{\mathbb{R}^d} \rho \lambda ^\varepsilon \partial _t  (u_i ^\varepsilon )^2 \, dx + \frac{1}{2\sigma}\int _{\mathbb{R}^d} \lambda ^\varepsilon \nabla \rho \cdot \nabla  (u_i ^\varepsilon )^2 \, dx\Big)=0
\label{eq3.34}
\end{equation}
by \eqref{eq1.2}. Thus \eqref{eq3.5}, \eqref{monoton2}, and \eqref{eq3.34} imply \eqref{monoton}.

\end{proof}

\section{Final remarks}
In this section, we consider the following systems of the Allen-Cahn equations:
\begin{equation}
\left\{ 
\begin{array}{ll}
\varepsilon \partial _t v^{\varepsilon}_i  =\varepsilon \Delta v_i ^\varepsilon - \dfrac{W ' (v_i ^\varepsilon)}{\varepsilon} + \Lambda ^\varepsilon _1 ,& (x,t)\in \Omega \times (0,T),  \\
v^{\varepsilon} _i (x,0) = v^{\varepsilon} _{0,i} (x) ,  &x\in \Omega,
\end{array} \right.
\label{ac5}
\end{equation} 
and
\begin{equation}
\left\{ 
\begin{array}{ll}
\varepsilon \partial _t w^{\varepsilon}_i  =\varepsilon \Delta w_i ^\varepsilon - \dfrac{W ' (w_i ^\varepsilon)}{\varepsilon} + \Lambda ^\varepsilon _2 \sqrt{ 2W (w_i ^\varepsilon)} ,& (x,t)\in \Omega \times (0,T),  \\
w^{\varepsilon} _i (x,0) = w^{\varepsilon} _{0,i} (x) ,  &x\in \Omega,
\end{array} \right.
\label{ac4}
\end{equation} 
where 
\begin{equation*}
 \Lambda^\varepsilon _1 =  \frac{1}{N}\sum _{i=1} ^N \Big(- \varepsilon \Delta v_i ^\varepsilon + \dfrac{W ' (v_i ^\varepsilon)}{\varepsilon} \Big) \ \ \text{and} \ \ \Lambda^\varepsilon _2 = \frac{ \sum _{i=1} ^N \sqrt{2W(w_i ^\varepsilon)} \Big(- \varepsilon \Delta w_i ^\varepsilon + \dfrac{W ' (w_i ^\varepsilon)}{\varepsilon} \Big) }{ \sum_{i=1} ^N 2W(w_i ^\varepsilon) }.
\label{lambda2}
\end{equation*}
The solutions for \eqref{ac5} and \eqref{ac4} with $\sum _{i=1} ^N v_{0,i} ^\varepsilon (x) \equiv 1$ and $\sum _{i=1} ^N k (w_{0,i} ^\varepsilon (x))\equiv \frac{1}{6}$ satisfy
\[\frac{\partial }{\partial t} \sum_{i=1} ^N v_i ^\varepsilon =0 \quad \text{and} \quad \frac{\partial }{\partial t}\sum_{i=1} ^N k(w^\varepsilon _i)=0,\]
where $k(s) = \int _0 ^s \sqrt{2W(a)} \, da$. Thus
\begin{equation}
\sum _{i=1} ^N v_i ^\varepsilon (x,t) =1 \quad \text{and} \quad \sum _{i=1} ^N k( w_i ^\varepsilon (x,t) )=\frac{1}{6} \qquad \text{for any} \ (x,t) \in \Omega \times (0,T).
\label{eq4.2}
\end{equation}
Note that $k(0)=0$ and $k(1) =\frac{1}{6}$ and 
\[ \lim _{\varepsilon \to 0} v_i ^\varepsilon (x,t) = 0 \ \text{or} \ 1 \ \ \text{and} \ \ \lim _{\varepsilon \to 0} w_i ^\varepsilon (x,t) = 0 \ \text{or} \ 1 \ \text{a.e.} \ (x,t) \in \Omega \times [0,T)\] 
by an argument similar to that in Lemma \ref{lem3.5}. Thus \eqref{eq4.2} corresponds to \eqref{eq1.2} and \eqref{eq1.7}. The solution $v_i ^\varepsilon$ satisfies Proposition \ref{prop3.1} by substituting $v_i ^\varepsilon$ into $u_i ^\varepsilon$ (the same is true of  $w_i ^\varepsilon$). Therefore we have
\begin{theorem}
Let $d,N\geq 2$. Assume that $v^\varepsilon$ (or $w^\varepsilon$) is a classical solution for \eqref{ac5} (resp. \eqref{ac4}) with \eqref{eq4.2}, and the initial data satisfies \eqref{eq2.4}, by substituting $v_i ^\varepsilon$ (resp. $w_i ^\varepsilon$) into $u_i ^\varepsilon$, for the definition of $\mu _t ^\varepsilon$. Then, under the Assumption A there exists a subsequence $\varepsilon \to 0$ such that Theorem \ref{mainresults} holds with $u_i ^\varepsilon = v^\varepsilon _i$ (resp. $u_i ^\varepsilon = w^\varepsilon _i$).
\end{theorem}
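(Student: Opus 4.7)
The observation driving the proof is that equations \eqref{ac5} and \eqref{ac4} differ from \eqref{ac} only in the form of their Lagrange multiplier term, and that in each case this term is engineered so that, after summation over $i$, it is orthogonal to $\partial_t u_i^\varepsilon$ and $\partial_{x_k} u_i^\varepsilon$. Concretely, for \eqref{ac} one has $\sum_i \lambda^\varepsilon u_i^\varepsilon \, X u_i^\varepsilon = \tfrac{1}{2}\lambda^\varepsilon X|u^\varepsilon|^2 = 0$ for any first-order operator $X$; for \eqref{ac5}, $\sum_i \Lambda_1^\varepsilon \, X v_i^\varepsilon = \Lambda_1^\varepsilon X\!\big(\sum_i v_i^\varepsilon\big)=0$; and for \eqref{ac4}, $\sum_i \Lambda_2^\varepsilon \sqrt{2W(w_i^\varepsilon)}\, X w_i^\varepsilon = \Lambda_2^\varepsilon X\!\big(\sum_i k(w_i^\varepsilon)\big)=0$. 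This single structural identity is what makes every calculation in Section~3 close, so the proof consists of transferring each step from \eqref{ac} to \eqref{ac5} and \eqref{ac4} by replacing the use of \eqref{eq1.2} with the appropriate analog.

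First I would verify preservation of \eqref{eq4.2}. Summing \eqref{ac5} over $i$ gives $\varepsilon\partial_t \sum_i v_i^\varepsilon = \varepsilon\Delta\sum_i v_i^\varepsilon - \sum_i W'(v_i^\varepsilon)/\varepsilon + N\Lambda_1^\varepsilon = 0$ by the definition of $\Lambda_1^\varepsilon$, so $\sum_i v_i^\varepsilon \equiv 1$ is conserved. Multiplying \eqref{ac4} by $\sqrt{2W(w_i^\varepsilon)}=k'(w_i^\varepsilon)$ and summing yields $\varepsilon \partial_t \sum_i k(w_i^\varepsilon)=0$ by the definition of $\Lambda_2^\varepsilon$. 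Next, Proposition \ref{prop3.1} is reproved verbatim: the Lagrange contributions to $\sum_i \varepsilon \partial_t u_i^\varepsilon \, \partial_t u_i^\varepsilon$ and $\sum_i \varepsilon \partial_t u_i^\varepsilon \, \nabla u_i^\varepsilon\cdot g$ vanish by the orthogonality above, so the same energy identity \eqref{eq3.1} and the same relation \eqref{eq3.2} hold with $v^\varepsilon$ or $w^\varepsilon$ in place of $u^\varepsilon$. Lemma \ref{lem3.2}, the BV-compactness argument of Lemma \ref{lem3.5}, and the integration-by-parts identity \eqref{firstvariation} depend only on the energy estimate, the structure of $G\circ u_i^\varepsilon$, and Assumption~A, so they carry over unchanged; likewise Lemma \ref{lem3.6} follows from the equation-agnostic identity \eqref{eq3.2} whose analog we have just established.

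For Theorem \ref{mainresults}, parts (a) and (b) are immediate. For part (c), the derivation of Brakke's inequality in Section~3 uses the PDE to replace $\varepsilon\Delta u_i^\varepsilon - W'(u_i^\varepsilon)/\varepsilon$ by $\varepsilon\partial_t u_i^\varepsilon$ minus the Lagrange term; the summed Lagrange piece vanishes by our orthogonality, and the remaining steps (Fatou's lemma, $\xi_t^\varepsilon\to 0$, and localization on $\{\phi>0\}$) are unaffected. For part (d), I would rederive \eqref{monoton2} for the two new equations. The extra Lagrange contributions are $\tfrac{1}{\sigma}\int\rho\,\Lambda_1^\varepsilon\,\partial_t v_i^\varepsilon\,dx + \tfrac{1}{\sigma}\int \Lambda_1^\varepsilon\,\nabla\rho\cdot\nabla v_i^\varepsilon\,dx$ for \eqref{ac5}, and the weighted analog with $\sqrt{2W(w_i^\varepsilon)}$ for \eqref{ac4}; summing over $i$ collapses these to derivatives of $\sum_i v_i^\varepsilon$ or $\sum_i k(w_i^\varepsilon)$ respectively, hence to zero, yielding \eqref{monoton} as before.

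The main obstacle is essentially nonexistent; this is a bookkeeping theorem whose content is that the three constrained systems of the paper share the same Lagrange-multiplier orthogonality. The one mildly delicate point is step (d) for \eqref{ac4}, where one must take care to pair the factor $\sqrt{2W(w_i^\varepsilon)}$ inside $\Lambda_2^\varepsilon\sqrt{2W(w_i^\varepsilon)}$ with a derivative of $w_i^\varepsilon$ so as to reconstitute $\partial_t k(w_i^\varepsilon)$ or $\partial_{x_k} k(w_i^\varepsilon)$ before invoking the constraint. After that, the entire machinery of Sections~2 and 3 transplants without further modification.
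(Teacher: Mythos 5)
Your proposal is correct and follows essentially the same route as the paper: the paper likewise verifies that the constraints \eqref{eq4.2} are conserved and then observes that, because the summed Lagrange-multiplier terms pair to zero against $\partial_t$ and spatial derivatives (reconstituting $\partial_t\sum_i v_i^\varepsilon$ or $\partial_t\sum_i k(w_i^\varepsilon)$), Proposition \ref{prop3.1} holds verbatim for $v_i^\varepsilon$ and $w_i^\varepsilon$, after which Section 3 applies unchanged. Your write-up is in fact more explicit than the paper's (which compresses all of this into a brief remark), particularly in tracking the weighted constraint through the monotonicity formula for \eqref{ac4}, but the underlying argument is identical.
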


%If $N=2$, then the solution for \eqref{ac4} satisfies
%\begin{equation}
%\varepsilon \partial _t v^{\varepsilon}  =\varepsilon \Delta v ^\varepsilon - \dfrac{\tilde W ' (v ^\varepsilon)}{\varepsilon}  ,\qquad (x,t)\in \Omega \times (0,\infty),  
%\label{ac5}
%\end{equation} 
%where $v^\varepsilon :=u_1 ^\varepsilon  - u _2 ^\varepsilon$ and $\tilde W (s) =$. Thus, $v^\varepsilon$ converges to the Brakke's mean curvature flow under several suitable conditions without Assumption A by \cite{Ilmanen}.
%
%\bigskip

\noindent
\textbf{Acknowledgments}

\noindent
%The author is grateful to Professor OOOO and Professor OOOO for their valuable comments. 
This work was supported by JSPS KAKENHI
Grant Numbers JP16K17622, JP17J02386, 
and JSPS Leading Initiative for Excellent Young Researchers(LEADER) operated by Funds for the Development of Human Resources in Science and Technology.
%The author was supported by
%JSPS Research Fellowships for Young Scientists.

%%%%%%%%%%%%%%%%%%%%%%%%%%%%%%%%%%%%%%%%%%%%%%%%%%%%%%%%%%%%%%%%%%%%%%%%%%%%%%%%%%%%%%%%%%%%%%%%%%%%%%%%%%%%%%%%%%%%%%%%%%%%%%%%%%%

\bigskip
\noindent
{\small 
\qquad Keisuke Takasao\\
\qquad Department of Mathematics/Hakubi Center,\\
\qquad Kyoto University\\
\qquad Kitashirakawa-Oiwakecho, Sakyo,\\
\qquad Kyoto 606-8502, Japan\\
\qquad E-mail address: k.takasao@math.kyoto-u.ac.jp}
\end{document}